\documentclass[12pt]{article}

\usepackage{amsmath,amsthm}
\usepackage{amssymb}
\usepackage{hyperref}
\usepackage{lscape}

\def\fl#1{\left\lfloor#1\right\rfloor}

\newtheorem{theorem}{Theorem}
\newtheorem{Prop}{Proposition}
\newtheorem{Cor}{Corollary}
\newtheorem{Lem}{Lemma}

\begin{document}

\title{Hypergeometric Euler numbers}

\author{
Takao Komatsu\\
\small School of Mathematics and Statistics\\
\small Wuhan University\\
\small Wuhan 430072 China\\
\small \texttt{komatsu@whu.edu.cn}\\\\
Huilin Zhu\\
\small School of Mathematical Sciences\\
\small Xiamen University\\
\small Xiamen 361005 China\\
\small \texttt{hlzhu@xmu.edu.cn}
}

\date{
\small MR Subject Classifications: 11B68, 11B37, 11C20, 15A15, 33C20.
}

\maketitle

\begin{abstract}
In this paper, we introduce the hypergeometric Euler number as an analogue of the hypergeometric Bernoulli number and the hypergeometric Cauchy number. We study several expressions and sums of products of hypergeometric Euler numbers.
We also introduce complementary hypergeometric Euler numbers and give some characteristic properties. There are strong reasons why these hypergeometric numbers are important. The hypergeometric numbers have one of the advantages that yield the natural extensions of determinant expressions of the numbers, though  many kinds of generalizations of the Euler numbers have been considered by many authors. \\
{\bf Key words and phrases}: Hypergeometric Euler numbers, Euler numbers, Bernoulli numbers, Hasse-Teich\"muller derivative, sums of products, determinants.
\end{abstract}

\section{Introduction}

Euler numbers $E_n$ are defined by the generating function
\begin{equation}
\frac{1}{\cosh t}=\sum_{n=0}^\infty E_n\frac{t^n}{n!}\,.
\label{def:euler}
\end{equation}
One of the different definitions is
$$
\frac{2}{e^t+1}=\sum_{n=0}^\infty E_n\frac{t^n}{n!}
$$
(see e.g. \cite{Ap}). Generalizations of one or other of these definitions have previously been studied.  For example, one kind of poly-Euler numbers is a typical generalization, in the aspect of $L$-functions (\cite{OS1,OS2,OS3}). Other generalizations can be found in \cite{CCJK,KRS} and the reference therein.

A different type of generalization is based upon hypergeometric functions.
For $N\ge 1$, define hypergeometric Bernoulli numbers $B_{N,n}$ (see \cite{HN1,HN2,Kamano}) by
$$
\frac{1}{{}_1 F_1(1;N+1;t)}=\frac{t^N/N!}{e^t-\sum_{n=0}^{N-1}t^n/n!}=\sum_{n=0}^\infty B_{N,n}\frac{t^n}{n!}\,,
$$
where
$$
{}_1 F_1(a;b;z)=\sum_{n=0}^\infty\frac{(a)^{(n)}}{(b)^{(n)}}\frac{z^n}{n!}
$$
is the confluent hypergeometric function with $(x)^{(n)}=x(x+1)\cdots(x+n-1)$ ($n\ge 1$) and $(x)^{(0)}=1$.
When $N=1$, $B_n=B_{1,n}$ are classical Bernoulli numbers defined by
$$
\frac{t}{e^t-1}=\sum_{n=0}^\infty B_n\frac{t^n}{n!}\,.
$$
In addition, define hypergeometric Cauchy numbers $c_{N,n}$ (see \cite{Ko3}) by
$$
\frac{1}{{}_2 F_1(1,N;N+1;-t)}=\frac{(-1)^{N-1}t^N/N}{\log(1+t)-\sum_{n=1}^{N-1}(-1)^{n-1}t^n/n}=\sum_{n=0}^\infty c_{N,n}\frac{t^n}{n!}\,,
$$
where
$$
{}_2 F_1(a,b;c;z)=\sum_{n=0}^\infty\frac{(a)^{(n)}(b)^{(n)}}{(c)^{(n)}}\frac{z^n}{n!}
$$
is the Gauss hypergeometric function.
When $N=1$, $c_n=c_{1,n}$ are classical Cauchy numbers defined by
$$
\frac{t}{\log(1+t)}=\sum_{n=0}^\infty c_n\frac{t^n}{n!}\,.
$$
Our generalization is different from the generalizations in \cite{KLP} and the references therein.

Now, for $N\ge 0$ define {\it hypergeometric Euler numbers} $E_{N,n}$ ($n=0,1,2,\dots$) by
\begin{equation}
\frac{1}{{}_1 F_2(1;N+1,(2 N+1)/2;t^2/4)}=\sum_{n=0}^\infty E_{N,n}\frac{t^n}{n!}\,,
\label{def1:hypergeuler}
\end{equation}
where ${}_1 F_2(a;b,c;z)$ is the hypergeometric function defined by
$$
{}_1 F_2(a;b,c;z)=\sum_{n=0}^\infty\frac{(a)^{(n)}}{(b)^{(n)}(c)^{(n)}}\frac{z^n}{n!}\,.
$$
It is seen that
\begin{align}
\cosh t-\sum_{n=0}^{N-1}\frac{t^{2 n}}{(2 n)!}&=\frac{t^{2 N}}{(2 N)!}\sum_{n=0}^\infty\frac{(2 N)!n!}{(2 n+2 N)!}\frac{(t^2)^n}{n!}\notag\\
&=\frac{t^{2 N}}{(2 N)!}{}_1 F_2\bigl(1;N+1,\frac{2 N+1}{2};\frac{t^2}{4}\bigr)\,.
\label{def2:hypergeuler}
\end{align}
When $N=0$, then $E_n=E_{0,n}$ are classical Euler numbers defined in (\ref{def:euler}).
In \cite{KP}, the truncated Euler polynomial $E_{m,n}(x)$ is introduced as a generalization of the classical Euler polynomial $E_n(x)$. The concept is similar but without hypergeometric functions.

We list the numbers $E_{N,n}$ for $0\le N\le 6$ and $0\le n\le 12$ in Table 1.
From (\ref{def2:hypergeuler}) we see that $E_{N,n}=0$ if $n$ is odd.
Similarly to poly-Euler numbers (\cite{OS1,OS2,OS3}), hypergeometric Euler numbers are rational numbers, though the classical Euler numbers are integers.

\begin{landscape}
\begin{table}[phtb]
  \tiny
  \begin{center}
    \caption{The numbers $E_{N,n}$ for $0\le N\le 6$ and $0\le n\le 14$}
    \begin{tabular}{|l|cccccccc} \hline
    $n$&$0$&$2$&$4$&$6$&$8$&$10$&$12$&$14$ \\ \hline
    $E_{0,n}$&$1$&$-1$&$5$&$-61$&$1385$&$-50521$&$2702765$&$-199360981$\\
    $E_{1,n}$&$1$&$-1/6$&$1/10$&$-5/42$&$7/30$&$-15/22$&$7601/2730$&$-91/6$\\
    $E_{2,n}$&$1$&$-1/15$&$13/1050$&$-1/350$&$-31/173250$&$1343/750750$&$-6137/2388750$&$3499/6693750$\\
    $E_{3,n}$&$1$&$-1/28$&$17/5880$&$-29/362208$&$-863/6420960$&$6499/131843712$&$6997213/156894017280$&$-68936107/917226562560$\\
    $E_{4,n}$&$1$&$-1/45$&$7/7425$&$53/2027025$&$-443/22052250$&$-10157/4873547250$&$558599021 /126395447928750$&$39045649/62503243481250$\\
    $E_{5,n}$&$1$&$-1/66$&$25/66066$&$47/2906904$&$-16945/5300012718$&$-475767/492312292472$&$71844089/268802511689712$&$1162911301/4483980359834976$\\
    $E_{6,n}$&$1$&$-1/91$&$29/165620$&$1205/153728484$&$-2279/4467168888$&$-6430761/25339270989032$&$-17675104079/4917799642149532320$&$837165624457/24588998210747661600$\\ \hline
    \end{tabular}
  \end{center}
\end{table}
\end{landscape}

From (\ref{def1:hypergeuler}) and (\ref{def2:hypergeuler}), we have
\begin{align*}
\frac{t^{2 N}}{(2 N)!}&=\left(\sum_{n=N}^\infty\frac{t^{2 n}}{(2 n)!}\right)\left(\sum_{n=0}^\infty E_{N,n}\frac{t^n}{n!}\right)\\
&=t^{2 N}\left(\sum_{n=0}^\infty\frac{\frac{1+(-1)^n}{2}t^n}{(n+2 N)!}\right)\left(\sum_{n=0}^\infty E_{N,n}\frac{t^n}{n!}\right)\\
&=t^{2 N}\sum_{n=0}^\infty\left(\sum_{i=0}^n\frac{\frac{1+(-1)^{n-i}}{2}}{(2 N+n-i)!}\frac{E_{N,i}}{i!}\right)t^n\,.
\end{align*}
Hence, for $n\ge 1$, we have
$$
\sum_{i=0}^n\frac{1+(-1)^{n-i}}{(2 N+n-i)!i!}E_{N,i}=0\,.
$$
Thus, we have the following proposition.  Note that $E_{N,n}=0$ when $n$ is odd.

\begin{Prop}
$$
\sum_{i=0}^{n/2}\frac{1}{(2 N+n-2 i)!(2 i)!}E_{N,2 i}=0\quad\hbox{($n\ge 2$ is even)}
$$
and $E_{N,0}=1$.
\label{prop1}
\end{Prop}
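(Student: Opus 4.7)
The plan is to multiply the defining generating function (\ref{def1:hypergeuler}) by the explicit series identity (\ref{def2:hypergeuler}) and read off coefficients. Combining the two displays gives
$$
\frac{t^{2N}}{(2N)!} = \left(\cosh t - \sum_{k=0}^{N-1}\frac{t^{2k}}{(2k)!}\right)\left(\sum_{n=0}^\infty E_{N,n}\frac{t^n}{n!}\right),
$$
which is exactly the Cauchy-product form already displayed in the paragraph preceding the proposition.

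First I would rewrite the $\cosh$-tail as
$$
\sum_{k\ge N}\frac{t^{2k}}{(2k)!} = t^{2N}\sum_{n\ge 0}\frac{(1+(-1)^n)/2}{(n+2N)!}\,t^n,
$$
so that the factor $t^{2N}$ can be cancelled against the left-hand side. Comparing coefficients of $t^n$ on both sides then yields $E_{N,0}=1$ at $n=0$, and for every $n\ge 1$ the relation
$$
\sum_{i=0}^{n}\frac{1+(-1)^{n-i}}{(2N+n-i)!\,i!}\,E_{N,i}=0.
$$

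Next I would split by parity. When $n$ is odd, the indicator $1+(-1)^{n-i}$ forces $i$ to be odd as well, but $E_{N,i}=0$ for odd $i$, so the identity is vacuous. When $n$ is even, only even $i$ contribute; setting $i=2j$ and noting that $1+(-1)^{n-i}=2$ on the surviving terms, I would divide through by $2$ to obtain
$$
\sum_{j=0}^{n/2}\frac{1}{(2N+n-2j)!\,(2j)!}\,E_{N,2j}=0,
$$
which is the statement of the proposition after renaming $j$ back to $i$.

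There is no substantive obstacle here: the argument is a coefficient comparison and a parity reduction. The only care required is bookkeeping, namely (i) making sure the $\cosh$-tail is written as $\sum_{k\ge N}$ so the factor $t^{2N}$ cleanly cancels, and (ii) correctly absorbing the factor of $2$ from $1+(-1)^{n-i}$ when restricting to even indices. The calculation shown just before the proposition statement already provides the skeleton for this argument.
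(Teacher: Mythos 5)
Your argument is correct and is essentially identical to the paper's own derivation: the authors obtain the same Cauchy-product identity from (\ref{def1:hypergeuler}) and (\ref{def2:hypergeuler}), extract $\sum_{i=0}^n\frac{1+(-1)^{n-i}}{(2N+n-i)!\,i!}E_{N,i}=0$ for $n\ge 1$, and then restrict to even indices exactly as you do. No gaps; the parity bookkeeping is handled correctly.
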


By using the identity in Proposition \ref{prop1} or the identity
\begin{equation}
E_{N,n}=-n!(2 N)!\sum_{i=0}^{n/2-1}\frac{E_{N,2 i}}{(2 N+n-2 i)!(2 i)!}\,,
\label{rel:hgeuler}
\end{equation}
we can obtain the values of $E_{N,n}$ ($n=0,2,4,\dots$). We record the first few values of $E_{N,n}$:
{\small
\begin{align*}
E_{N,2}&=-\frac{2}{(2N+1)(2N+2)}\,,\\
E_{N,4}&=\frac{2\cdot 4!(4 N+5)}{(2 N+1)^2(2 N+2)^2(2 N+3)(2 N+4)}\,,\\
E_{N,6}&=\frac{4\cdot 6!(8 N^3-2 N^2-65 N-61)}{(2 N+1)^3(2 N+2)^3(2 N+3)(2 N+4)(2 N+5)(2 N+6)}\,,\\
E_{N,8}&=\frac{16\cdot 8!}{(2 N+1)^4(2 N+2)^4(2 N+3)^2(2 N+4)^2(2 N+6)(2 N+7)(2 N+8)}\\
&\quad\times(16 N^6-44 N^5-516 N^4-667 N^3+1283 N^2+3126 N+1662)\,.
\end{align*}
}

We have an explicit expression of $E_{N,n}$ for each even $n$:

\begin{theorem}
For $N\ge 0$ and $n\ge 1$ we have
$$
E_{N,2 n}=(2 n)!\sum_{r=1}^n(-1)^r\sum_{i_1+\cdots+i_r=n\atop i_1,\dots,i_r\ge 1}\frac{\bigl((2 N)!\bigr)^r}{(2 N+2 i_1)!\cdots(2 N+2 i_r)!}\,.
$$
\label{th1}
\end{theorem}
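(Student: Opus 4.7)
The plan is a direct formal-power-series manipulation of the defining generating function, based on expanding a geometric series.

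First, I would combine (\ref{def1:hypergeuler}) and (\ref{def2:hypergeuler}) to rewrite the generating function in a form amenable to series inversion. From (\ref{def2:hypergeuler}) we can pull out the factor $t^{2N}/(2N)!$ to get
\begin{align*}
\sum_{n=0}^\infty E_{N,n}\frac{t^n}{n!}
&=\frac{t^{2N}/(2N)!}{\cosh t-\sum_{n=0}^{N-1}t^{2n}/(2n)!}
=\frac{1}{\displaystyle\sum_{n=0}^\infty\frac{(2N)!}{(2n+2N)!}t^{2n}}
=\frac{1}{1+h(t)}\,,
\end{align*}
where $h(t):=\sum_{n=1}^\infty\frac{(2N)!}{(2n+2N)!}t^{2n}$ has no constant term, so $1/(1+h(t))$ is a well-defined formal power series.

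Next I would expand via the geometric series
$$
\frac{1}{1+h(t)}=\sum_{r=0}^\infty(-1)^r h(t)^r\,.
$$
The $r=0$ term contributes $1=E_{N,0}$. For $r\ge 1$, multiplying out the $r$-fold product and collecting the exponent gives
$$
h(t)^r=\sum_{n=r}^\infty t^{2n}\sum_{i_1+\cdots+i_r=n\atop i_1,\dots,i_r\ge 1}\frac{\bigl((2N)!\bigr)^r}{(2N+2 i_1)!\cdots(2N+2 i_r)!}\,.
$$
Swapping the order of summation (which is legitimate because for each fixed $n$ only finitely many $r\le n$ contribute, since each $i_j\ge 1$) yields
$$
\frac{1}{1+h(t)}=1+\sum_{n=1}^\infty t^{2n}\sum_{r=1}^n(-1)^r\sum_{i_1+\cdots+i_r=n\atop i_1,\dots,i_r\ge 1}\frac{\bigl((2N)!\bigr)^r}{(2N+2 i_1)!\cdots(2N+2 i_r)!}\,.
$$

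Finally, comparing the coefficient of $t^{2n}$ on both sides (and using $E_{N,2n+1}=0$, so that only even powers of $t$ appear in the target series) gives
$$
\frac{E_{N,2n}}{(2n)!}=\sum_{r=1}^n(-1)^r\sum_{i_1+\cdots+i_r=n\atop i_1,\dots,i_r\ge 1}\frac{\bigl((2N)!\bigr)^r}{(2N+2 i_1)!\cdots(2N+2 i_r)!}\,,
$$
which is the claimed formula. There is no real obstacle: the only step that needs a moment of care is justifying the swap of summations, which is immediate at the level of formal power series because the condition $i_j\ge 1$ forces $r\le n$.
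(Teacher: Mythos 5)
Your proof is correct. It differs from the paper's own proof of Theorem~\ref{th1}, which proceeds by induction on $n$ using the recurrence of Proposition~\ref{prop1}: there the authors substitute the induction hypothesis for $E_{N,2i}$ ($i<n$) into $E_{N,2n}=-(2n)!(2N)!\sum_{i=0}^{n-1}E_{N,2i}/\bigl((2N+2n-2i)!(2i)!\bigr)$ and reindex so that the last part $i_r=n-i$ is absorbed into compositions of $n$ of length $r$. Your argument instead writes the reciprocal series as $1/(1+h(t))$ with $h$ having no constant term and expands the geometric series, which produces the sum over compositions directly and with no induction; the only point needing care, the interchange of the sums over $r$ and $n$, is immediate for formal power series since $i_j\ge 1$ forces $r\le n$. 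It is worth noting that the paper's second proof of the same theorem, via the quotient rule for the Hasse--Teichm\"uller derivative (Lemma~\ref{quotientrules}, formula (\ref{quotientrule1})), is essentially your argument in disguise: that quotient rule, evaluated at $t=0$ with $F(0)=1$, encodes exactly the geometric-series expansion of $1/F$. Your version is the more elementary and self-contained of the three, avoiding both the induction bookkeeping and the external lemma; the induction proof, on the other hand, makes transparent how the formula is forced by the recurrence (\ref{rel:hgeuler}), which is the relation actually used to compute the numbers.
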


\begin{proof}
The proof is done by induction for $n$. If $n=1$, then
$$
E_{N,2}=2!(-1)\frac{(2 N)!}{(2 N+2)!}=-\frac{2}{(2N+1)(2N+2)}\,.
$$
Assume that the result is valid up to $n-1$. Then by Proposition \ref{prop1}
\begin{align*}
E_{N,2 n}&=-(2 n)!(2 N)!\sum_{i=0}^{n-1}\frac{E_{N,2 i}}{(2 N+2 n-2 i)!(2 i)!}\\
&=-(2 n)!(2 N)!\sum_{i=1}^{n-1}\frac{1}{(2 N+2 n-2 i)!}\sum_{r=1}^i(-1)^r\\
&\qquad\times\sum_{i_1+\cdots+i_r=i\atop i_1,\dots,i_r\ge 1}\frac{\bigl((2 N)!\bigr)^r}{(2 N+2 i_1)!\cdots(2 N+2 i_r)!}\\
&\quad-(2 n)!(2 N)!\frac{1}{(2 N+2 n)!}\\
&=-(2 n)!(2 N)!\sum_{r=1}^{n-1}(-1)^r\bigl((2 N)!\bigr)^r\sum_{i=r}^{n-1}\frac{1}{(2 N+2 n-2 i)!}\\
&\qquad\times\sum_{i_1+\cdots+i_r=i\atop i_1,\dots,i_r\ge 1}\frac{1}{(2 N+2 i_1)!\cdots(2 N+2 i_r)!}\\
&\quad-\frac{(2 n)!(2 N)!}{(2 N+2 n)!}\\
&=-(2 n)!(2 N)!\sum_{r=2}^{n}(-1)^{r-1}\bigl((2 N)!\bigr)^{r-1}\sum_{i=r-1}^{n-1}\frac{1}{(2 N+2 n-2 i)!}\\
&\qquad\times\sum_{i_1+\cdots+i_{r-1}=i\atop i_1,\dots,i_{r-1}\ge 1}\frac{1}{(2 N+2 i_1)!\cdots(2 N+2 i_{r-1})!}\\
 &\quad-\frac{(2 n)!(2 N)!}{(2 N+2 n)!}\\
 &=(2 n)!\sum_{r=2}^{n}(-1)^{r}\bigl((2 N)!\bigr)^{r}\sum_{i_1+\cdots+i_{r}=n\atop i_1,\dots,i_{r}\ge 1}\frac{1}{(2 N+2 i_1)!\cdots(2 N+2 i_r)!}\\
 &\quad-\frac{(2 n)!(2 N)!}{(2 N+2 n)!}\quad (n-i=i_r)\\
 &=(2 n)!\sum_{r=1}^n(-1)^r\sum_{i_1+\cdots+i_r=n\atop i_1,\dots,i_r\ge 1}\frac{\bigl((2 N)!\bigr)^r}{(2 N+2 i_1)!\cdots(2 N+2 i_r)!}\,.
 \end{align*}
 \end{proof}

\section{Determinant expressions of hypergeometric numbers}

These hypergeometric numbers have one of the advantages that yield the natural extensions of determinant expressions of the numbers, though  many kinds of generalizations of the Euler numbers have been considered by many authors.

By using Proposition \ref{prop1} or the relation (\ref{rel:hgeuler}), we have a determinant expression of hypergeometric Euler numbers (\cite{Ko12}).

\begin{Prop}
The hypergeometric Euler numbers $E_{N,2n}$ ($N\ge 0$, $n\ge 1$) can be expressed as
$$
E_{N, 2 n}=(-1)^n(2 n)!
\left|
\begin{array}{cccc}
\frac{(2 N)!}{(2 N+2)!}&1&&\\
\frac{(2 N)!}{(2 N+4)!}&\ddots&\ddots&\\
\vdots&&\ddots&1\\
\frac{(2 N)!}{(2 N+2 n)!}&\cdots&\frac{(2 N)!}{(2 N+4)!}&\frac{(2 N)!}{(2 N+2)!}
\end{array}
\right|\,.
$$
\label{det:hge}
\end{Prop}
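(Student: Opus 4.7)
The plan is to verify that both sides of the claimed identity satisfy the same first-order linear recurrence with the same initial value. Writing $T_n := E_{N,2n}/(2n)!$ and $a_k := (2N)!/(2N+2k)!$, a rearrangement of Proposition~\ref{prop1} (after the substitution $n\mapsto 2n$ and isolation of the top term $i=n$) yields
\[
T_n = -\sum_{k=1}^{n} a_k\, T_{n-k} \qquad (n \ge 1), \qquad T_0 = 1.
\]

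Set $D_0 := 1$ and let $D_n$ denote the $n\times n$ determinant in the statement. I will show
\[
D_n = \sum_{k=1}^{n} (-1)^{k-1} a_k\, D_{n-k}.
\]
This is the familiar recurrence for the determinant of a lower-Hessenberg matrix with $1$'s on the superdiagonal and constant subdiagonals, and I would prove it by Laplace expansion along the first column of $M_n$, whose entries are $(a_1,a_2,\dots,a_n)^T$. For each $j$, direct inspection shows that the minor $M_n^{(j,1)}$ (delete row $j$ and column $1$) has its first $j-1$ rows forming a unit-diagonal lower-triangular block; peeling those rows off by successive cofactor expansion along the first row of the current minor leaves exactly the matrix $M_{n-j}$. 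Hence $\det M_n^{(j,1)} = D_{n-j}$, and collecting the Laplace signs $(-1)^{j+1}$ produces the displayed recurrence.

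Finally, set $S_n := (-1)^n D_n$. A short sign check converts the recurrence for $D_n$ into
\[
S_n = -\sum_{k=1}^n a_k\, S_{n-k},
\]
which coincides with the recurrence satisfied by $T_n$; since $S_0 = D_0 = 1 = T_0$, induction on $n$ gives $T_n = S_n$ for all $n$. Therefore $E_{N,2n} = (2n)!\, T_n = (-1)^n (2n)!\, D_n$, as asserted.

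The main obstacle is the Hessenberg recurrence itself: one has to keep the row/column bookkeeping straight when identifying $\det M_n^{(j,1)}$ with $D_{n-j}$. If one wishes to avoid this calculation, the required recurrence for $D_n$ can instead be cited directly from the classical Trudi-type formula for determinants of lower-Hessenberg matrices with unit superdiagonal.
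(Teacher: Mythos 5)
Your proof is correct and follows essentially the same route the paper intends: it derives the convolution recurrence for $E_{N,2n}/(2n)!$ from Proposition~\ref{prop1} and shows the lower-Hessenberg determinant satisfies the matching recurrence via first-column cofactor expansion, which is exactly the determinant--recurrence correspondence the paper invokes (and later formalizes in Lemma~\ref{lema}). The sign bookkeeping in passing from $D_n$ to $S_n=(-1)^nD_n$ checks out, so nothing is missing.
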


In 1875, Glaisher gave several interesting determinant expressions of numbers, including Bernoulli, Cauchy and Euler numbers.
When $N=0$, the determinant in Proposition (\ref{det:hge}) is reduced to a famous determinant expression of Euler numbers ({\it cf.} \cite[p.52]{Glaisher}):
$$
E_{2n}=(-1)^n (2n)!
\begin{vmatrix}
\frac{1}{2!}& 1 &~& ~&~\\
\frac{1}{4!}&  \frac{1}{2!} & 1 &~&~\\
\vdots & ~  &  \ddots~~ &\ddots~~ & ~\\
\frac{1}{(2n-2)!}& \frac{1}{(2n-4)!}& ~&\frac{1}{2!} &  1\\
\frac{1}{(2n)!}&\frac{1}{(2n-2)!}& \cdots &  \frac{1}{4!} & \frac{1}{2!}
\end{vmatrix}\,.
$$

In \cite{AK1}, the hypergeometric Bernoulli numbers $B_{N,n}$ ($N\ge 1$, $n\ge 1$) can be expressed as
$$
B_{N,n}=(-1)^n n!\left|
\begin{array}{ccccc}
\frac{N!}{(N+1)!}&1&&&\\
\frac{N!}{(N+2)!}&\frac{N!}{(N+1)!}&&&\\
\vdots&\vdots&\ddots&1&\\
\frac{N!}{(N+n-1)!}&\frac{N!}{(N+n-2)!}&\cdots&\frac{N!}{(N+1)!}&1\\
\frac{N!}{(N+n)!}&\frac{N!}{(N+n-1)!}&\cdots&\frac{N!}{(N+2)!}&\frac{N!}{(N+1)!}
\end{array}
\right|\,.
$$
When $N=1$, we have a determinant expression of Bernoulli numbers (\cite[p.53]{Glaisher}):
\begin{equation}
B_n=(-1)^n n!\left|
\begin{array}{ccccc}
\frac{1}{2!}&1&&&\\
\frac{1}{3!}&\frac{1}{2!}&&&\\
\vdots&\vdots&\ddots&1&\\
\frac{1}{n!}&\frac{1}{(n-1)!}&\cdots&\frac{1}{2!}&1\\
\frac{1}{(n+1)!}&\frac{1}{n!}&\cdots&\frac{1}{3!}&\frac{1}{2!}
\end{array}
\right|\,.
\label{det:ber}
\end{equation}

In \cite{AK2}, the hypergeometric Cauchy numbers $c_{N,n}$ ($N\ge 1$, $n\ge 1$) can be expressed as
$$
c_{N,n}=n!\left|
\begin{array}{ccccc}
\frac{N}{N+1}&1&&&\\
\frac{N}{N+2}&\frac{N}{N+1}&&&\\
\vdots&\vdots&\ddots&1&\\
\frac{N}{N+n-1}&\frac{N}{N+n-2}&\cdots&\frac{N}{N+1}&1\\
\frac{N}{N+n}&\frac{N}{N+n-1}&\cdots&\frac{N}{N+2}&\frac{N}{N+1}
\end{array}
\right|\,.
$$
When $N=1$, we have a determinant expression of Cauchy numbers (\cite[p.50]{Glaisher}):
\begin{equation}
c_n=n!\left|
\begin{array}{ccccc}
\frac{1}{2}&1&&&\\
\frac{1}{3}&\frac{1}{2}&&&\\
\vdots&\vdots&\ddots&1&\\
\frac{1}{n}&\frac{1}{n-1}&\cdots&\frac{1}{2}&1\\
\frac{1}{n+1}&\frac{1}{n}&\cdots&\frac{1}{3}&\frac{1}{2}
\end{array}
\right|\,.
\label{det:cau}
\end{equation}

In \cite{Ko12}, the complementary Euler numbers $\widehat E_n$ and their hypergeometric generalizations (defined below) have also determinant expressions.

\section{Hasse-Teichm\"uller derivative}

We define the Hasse-Teichm\"uller derivative $H^{(n)}$ of order $n$ by
$$
H^{(n)}\left(\sum_{m=R}^{\infty} c_m z^m\right)
=\sum_{m=R}^{\infty} c_m \binom{m}{n}z^{m-n}
$$
for $\sum_{m=R}^{\infty} c_m z^m\in \mathbb{F}((z))$,
where $R$ is an integer and $c_m\in\mathbb{F}$ for any $m\geq R$.

The Hasse-Teichm\"uller derivatives satisfy the product rule \cite{Teich}, the quotient rule \cite{GN} and the chain rule \cite{Hasse}.
One of the product rules can be described as follows.
\begin{Lem}
For $f_i\in\mathbb F[[z]]$ ($i=1,\dots,k$) with $k\ge 2$ and for $n\ge 1$, we have
$$
H^{(n)}(f_1\cdots f_k)=\sum_{i_1,\dots,i_k\ge 0\atop i_1+\cdots+i_k=n}H^{(i_1)}(f_1)\cdots H^{(i_k)}(f_k)\,.
$$
\label{productrule2}
\end{Lem}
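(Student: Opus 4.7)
The plan is to proceed by induction on the number of factors $k$. The base case $k=2$ is the standard Leibniz-type rule for the Hasse-Teichm\"uller derivative, namely
$$
H^{(n)}(fg)=\sum_{i+j=n}H^{(i)}(f)H^{(j)}(g),
$$
which is the two-factor product rule already alluded to in the paragraph preceding the lemma (attributed to Teichm\"uller). For a self-contained verification of this base case, I would expand $f=\sum_{m}c_m z^m$ and $g=\sum_{m'}d_{m'}z^{m'}$, form $fg=\sum_{M}\bigl(\sum_{m+m'=M}c_m d_{m'}\bigr)z^M$, apply the definition of $H^{(n)}$, and then invoke the Vandermonde identity
$$
\binom{m+m'}{n}=\sum_{i+j=n}\binom{m}{i}\binom{m'}{j}
$$
to split the coefficient $\binom{M}{n}$. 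Regrouping the resulting triple sum as a product of two single sums and noting $z^{M-n}=z^{m-i}z^{m'-j}$ whenever $M=m+m'$ and $n=i+j$ recovers $\sum_{i+j=n}H^{(i)}(f)H^{(j)}(g)$ exactly.

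For the inductive step, assume the formula holds for $k-1$ factors. Writing $f_1\cdots f_k=(f_1\cdots f_{k-1})f_k$ and applying the $k=2$ case yields
$$
H^{(n)}(f_1\cdots f_k)=\sum_{j+i_k=n}H^{(j)}(f_1\cdots f_{k-1})H^{(i_k)}(f_k).
$$
Substituting the induction hypothesis
$$
H^{(j)}(f_1\cdots f_{k-1})=\sum_{i_1+\cdots+i_{k-1}=j\atop i_1,\dots,i_{k-1}\ge 0}H^{(i_1)}(f_1)\cdots H^{(i_{k-1})}(f_{k-1})
$$
and then merging the two summations into a single sum over tuples $(i_1,\dots,i_k)$ with all $i_\ell\ge 0$ and $i_1+\cdots+i_k=n$ produces the claimed identity.

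The main obstacle is essentially just careful bookkeeping: one has to check that all the rearrangements of formal series are coefficient-wise legitimate (which they are, since $H^{(n)}$ acts linearly on $\mathbb{F}((z))$ and for each exponent of $z$ only finitely many terms contribute to the output), and that the Vandermonde identity in the base case is applied in the direction that lets the double sum factor as a product of two single series. Once these points are in hand, both the base case and the inductive step collapse to straightforward re-indexing.
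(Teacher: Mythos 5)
Your proof is correct and complete: the Vandermonde identity $\binom{m+m'}{n}=\sum_{i+j=n}\binom{m}{i}\binom{m'}{j}$ is exactly what makes the two-factor case work coefficient-by-coefficient, and the induction on $k$ is routine. Note that the paper itself offers no proof of this lemma, merely citing Teichm\"uller for the product rule, so there is nothing to compare against; your argument is the standard one found in the cited literature and would serve as a valid self-contained justification.
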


The quotient rules can be described as follows.

\begin{Lem}
For $f\in\mathbb F[[z]]\backslash \{0\}$ and $n\ge 1$,
we have
\begin{align}
H^{(n)}\left(\frac{1}{f}\right)&=\sum_{k=1}^n\frac{(-1)^k}{f^{k+1}}\sum_{i_1,\dots,i_k\ge 1\atop i_1+\cdots+i_k=n}H^{(i_1)}(f)\cdots H^{(i_k)}(f)
\label{quotientrule1}\\
&=\sum_{k=1}^n\binom{n+1}{k+1}\frac{(-1)^k}{f^{k+1}}\sum_{i_1,\dots,i_k\ge 0\atop i_1+\cdots+i_k=n}H^{(i_1)}(f)\cdots H^{(i_k)}(f)\,.
\label{quotientrule2}
\end{align}
\label{quotientrules}
\end{Lem}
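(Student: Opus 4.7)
My plan is to prove the first identity~(\ref{quotientrule1}) by induction on $n$ using the product rule (Lemma~\ref{productrule2}) applied to the trivial identity $1=f\cdot(1/f)$, and then to deduce~(\ref{quotientrule2}) from~(\ref{quotientrule1}) by regrouping terms and verifying a binomial identity.

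First, since $H^{(n)}(1)=0$ for $n\ge 1$, Lemma~\ref{productrule2} applied to $f\cdot(1/f)=1$ yields
$$
0=\sum_{j=0}^{n}H^{(j)}(f)\,H^{(n-j)}(1/f),
$$
and isolating the $j=0$ term gives the recursion
$$
H^{(n)}(1/f)=-\frac{1}{f}\sum_{j=1}^{n}H^{(j)}(f)\,H^{(n-j)}(1/f).
$$
The base case $n=1$ reproduces the $k=1$ term of~(\ref{quotientrule1}). For the inductive step, the term $j=n$ contributes $-H^{(n)}(f)/f^{2}$, which is exactly the single-part composition $(n)$ in~(\ref{quotientrule1}). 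For $1\le j\le n-1$, I substitute the inductive hypothesis for $H^{(n-j)}(1/f)$; prepending $j$ to any ordered composition $(i_{1},\dots,i_{k'})$ of $n-j$ with positive parts produces an ordered composition $(j,i_{1},\dots,i_{k'})$ of $n$ into $k=k'+1$ positive parts, and every such composition arises in exactly one way. The sign and power combine as $-\tfrac{1}{f}\cdot\tfrac{(-1)^{k'}}{f^{k'+1}}=\tfrac{(-1)^{k}}{f^{k+1}}$, matching~(\ref{quotientrule1}).

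To pass from~(\ref{quotientrule1}) to~(\ref{quotientrule2}), I group each composition with nonnegative parts on the RHS of~(\ref{quotientrule2}) by the number $m\ge 1$ of positive entries: the $k-m$ zero entries each contribute a factor $f$ via $H^{(0)}(f)=f$, and their positions can be chosen in $\binom{k}{m}$ ways. This recasts the RHS as
$$
\sum_{m=1}^{n}\frac{1}{f^{m+1}}\left(\sum_{k=m}^{n}\binom{n+1}{k+1}\binom{k}{m}(-1)^{k}\right)\sum_{i_{1}+\cdots+i_{m}=n\atop i_{l}\ge 1}H^{(i_{1})}(f)\cdots H^{(i_{m})}(f),
$$
so it suffices to verify the binomial identity $\sum_{k=m}^{n}\binom{n+1}{k+1}\binom{k}{m}(-1)^{k-m}=1$ for $1\le m\le n$. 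I would prove this by induction on $n$ with $m$ fixed: Pascal's rule $\binom{n+2}{k+1}=\binom{n+1}{k+1}+\binom{n+1}{k}$ combined with the standard $\binom{n+1}{k}\binom{k}{m}=\binom{n+1}{m}\binom{n+1-m}{k-m}$ and the vanishing $\sum_{j=0}^{n+1-m}\binom{n+1-m}{j}(-1)^{j}=0$ together force the increment $S(n+1,m)-S(n,m)$ to be zero.

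The main obstacle I anticipate is the combinatorial bookkeeping in the inductive step of~(\ref{quotientrule1}), where one must verify that prepending produces each ordered composition of $n$ into $k\ge 2$ positive parts exactly once and that signs and powers of $1/f$ align. The reduction of~(\ref{quotientrule2}) to the binomial identity is a routine regrouping, and the identity itself is elementary once the right form of Pascal/Vandermonde is used.
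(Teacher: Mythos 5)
Your proposal is correct, and it is worth noting that the paper itself does not prove this lemma at all: it simply cites G\"ottfert--Niederreiter \cite{GN} for the quotient rule, so your argument supplies a self-contained proof that the paper omits. The induction for (\ref{quotientrule1}) via the product rule (Lemma \ref{productrule2}) applied to $f\cdot(1/f)=1$ is the standard derivation and your bookkeeping is right: isolating the $j=0$ term gives the recursion, the $j=n$ term produces the single-part composition, and prepending $j$ to a positive composition of $n-j$ hits each positive composition of $n$ into $k\ge 2$ parts exactly once with sign $(-1)^k/f^{k+1}$. The passage to (\ref{quotientrule2}) correctly reduces to the identity $\sum_{k=m}^{n}(-1)^{k-m}\binom{n+1}{k+1}\binom{k}{m}=1$ for $1\le m\le n$, which is true; besides your Pascal-rule induction, it follows in one line from $\sum_{k=0}^{n}\binom{n+1}{k+1}y^{k}=\bigl((1+y)^{n+1}-1\bigr)/y$ evaluated at $y=-(1+x)$ and extraction of the coefficient of $x^{m}$. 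Two small points to tighten: (i) Lemma \ref{productrule2} is stated only for $f_i\in\mathbb{F}[[z]]$, whereas $1/f\in\mathbb{F}((z))$ when $f(0)=0$; you should either remark that the product rule extends verbatim to Laurent series (it is just the Vandermonde convolution of binomial coefficients applied to the definition of $H^{(n)}$), or note that the paper only ever uses the lemma with $f(0)=1$; (ii) in the regrouping for (\ref{quotientrule2}) you should record explicitly that $m\ge 1$ because $n\ge 1$ forces at least one positive part, so the $m=0$ stratum is empty and the sum over $m$ indeed starts at $1$ as in (\ref{quotientrule1}).
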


By using the Hasse-Teichm\"uller derivative of order $n$, we shall obtain some explicit expressions of the hypergeometric Euler numbers.
\bigskip

{\it Another proof of Theorem \ref{th1}.}
Put
\begin{align*}
F:&={}_1 F_2\bigl(1;N+1,\frac{2 N+1}{2};\frac{t^2}{4}\bigr)\\
&=\sum_{n=0}^\infty\frac{(2 N)!}{(2 N+2 n)!}t^{2 n}
\end{align*}
for simplicity.
Note that
\begin{align*}
\left. H^{(i)}(F)\right|_{t=0}=\left.\sum_{j=0}^\infty\frac{(2 N)!}{(2 N+2 j)!}\binom{2 j}{i}t^{2 j-i}\right|_{t=0}
=
\begin{cases}
(2 N)!/(2 N+i)!&\text{if $i$ is even};\\
0&\text{if $i$ is odd}.
\end{cases}
\end{align*}
Hence, by using Lemma \ref{quotientrules} (\ref{quotientrule1}), we have
\begin{align*}
\frac{E_{N,n}}{n!}&=\left.H^{(n)}\left(\frac{1}{F}\right)\right|_{t=0}\\
&=\sum_{k=1}^n\left.\frac{(-1)^k}{F^{k+1}}\right|_{t=0}\sum_{i_1,\dots,i_k\ge 1\atop i_1+\cdots+i_k=n}\left.H^{(i_1)}(F)\right|_{t=0}\cdots\left.H^{(i_k)}(F)\right|_{t=0}\\
&=\sum_{k=1}^n(-1)^k\sum_{i_1,\dots,i_k\ge 1\atop 2(i_1+\cdots+i_k)=n}\frac{\bigl((2 N)!\bigr)^k}{(2 N+2 i_1)!\cdots(2 N+2 i_k)!}\,.
\end{align*}
\qed

We can express the hypergeometric Euler numbers also in terms of the binomial coefficients.
In fact, by using Lemma \ref{quotientrules} (\ref{quotientrule2})
instead of Lemma \ref{quotientrules} (\ref{quotientrule1})
in the above proof, we obtain a little different expression from one in Theorem \ref{th1}.

\begin{Prop}
For $N\ge 0$ and even $n\ge 2$,
$$
E_{N,n}=n!\sum_{k=1}^{n}(-1)^k\binom{n+1}{k+1}\sum_{i_1,\dots,i_k\ge 0\atop i_1+\cdots+i_k=n/2}\frac{\bigl((2 N)!\bigr)^k}{(2 N+2 i_1)!\cdots(2 N+2 i_k)!}\,.
$$
\label{th_euler2}
\end{Prop}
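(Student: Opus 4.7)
The plan is to repeat the Hasse--Teichm\"uller calculation used in the second proof of Theorem~\ref{th1}, but to invoke the alternative quotient rule (\ref{quotientrule2}) of Lemma~\ref{quotientrules} in place of (\ref{quotientrule1}). First, set
$$F:={}_1 F_2\bigl(1;N+1,\frac{2N+1}{2};\frac{t^2}{4}\bigr)=\sum_{j\ge 0}\frac{(2N)!}{(2N+2j)!}t^{2j},$$
so that by (\ref{def1:hypergeuler}) we have $1/F=\sum_{n\ge 0}E_{N,n}t^n/n!$, and hence $E_{N,n}/n!=H^{(n)}(1/F)\big|_{t=0}$. Note that $F|_{t=0}=1$, so the prefactors $1/F^{k+1}\big|_{t=0}$ in (\ref{quotientrule2}) are all equal to $1$.

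Next, I would recall from the proof of Theorem~\ref{th1} that
$$\left.H^{(i)}(F)\right|_{t=0}=\begin{cases}(2N)!/(2N+i)! & \text{if $i$ is even,}\\ 0 & \text{if $i$ is odd.}\end{cases}$$
Inserting this into (\ref{quotientrule2}) yields
$$\frac{E_{N,n}}{n!}=\sum_{k=1}^n(-1)^k\binom{n+1}{k+1}\sum_{i_1,\dots,i_k\ge 0\atop i_1+\cdots+i_k=n}\left.H^{(i_1)}(F)\right|_{t=0}\cdots\left.H^{(i_k)}(F)\right|_{t=0}.$$
Only tuples whose entries are all even survive. Substituting $i_j=2\ell_j$ turns the constraint into $\ell_1+\cdots+\ell_k=n/2$ with $\ell_j\ge 0$ (which is integral exactly because $n$ is even), and each product collapses to $\bigl((2N)!\bigr)^k/\prod_j(2N+2\ell_j)!$. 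Renaming $\ell_j$ back to $i_j$ recovers the claimed identity.

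The main point to watch is that (\ref{quotientrule2}) admits $i_j=0$, in which case the factor $\left.H^{(0)}(F)\right|_{t=0}=(2N)!/(2N)!=1$ contributes trivially. Correctly handling these zero entries is what accounts for both the weaker range $i_j\ge 0$ here (as opposed to $i_j\ge 1$ in Theorem~\ref{th1}) and the appearance of the binomial weight $\binom{n+1}{k+1}$. Beyond this bookkeeping the derivation is routine, so I do not anticipate any substantive obstacle: once Lemma~\ref{quotientrules} is in hand, the proof is a direct substitution.
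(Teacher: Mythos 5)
Your proposal is correct and follows exactly the route the paper intends: the authors state that Proposition~\ref{th_euler2} is obtained by rerunning the Hasse--Teichm\"uller computation from the second proof of Theorem~\ref{th1} with the quotient rule (\ref{quotientrule2}) in place of (\ref{quotientrule1}), which is precisely what you do. Your handling of the $i_j\ge 0$ terms via $H^{(0)}(F)\big|_{t=0}=1$ and the reindexing $i_j=2\ell_j$ is the right bookkeeping and matches the stated formula.
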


For example, when $n=4$, we get
\begin{align*}
E_4&=4!\left(-\binom{5}{2}\frac{1}{4!}+\binom{5}{3}\left(\frac{2}{4!}+\frac{1}{2!2!}\right)\right.\\
&\qquad\left.-\binom{5}{4}\left(\frac{3}{4!}+\frac{3}{2!2!}\right)+\binom{5}{5}\left(\frac{4}{4!}+\frac{6}{2!2!}\right)\right)\\
&=5\,,\\
E_{1,4}&=4!\left(-\binom{5}{2}2\frac{1}{6!}+\binom{5}{3}2^2\left(\frac{2}{6!2!}+\frac{1}{4!4!}\right)\right.\\
&\qquad\left.-\binom{5}{4}2^3\left(\frac{3}{6!2!2!}+\frac{3}{4!4!2!}\right)+\binom{5}{5}2^4\left(\frac{4}{6!2!2!2!}+\frac{6}{4!4!2!2!}\right)\right)\\
&=\frac{1}{10}\,,\\
E_{2,4}&=4!\left(-\binom{5}{2}4!\frac{1}{8!}+\binom{5}{3}(4!)^2\left(\frac{2}{8!4!}+\frac{1}{6!6!}\right)\right.\\
&\qquad\left.-\binom{5}{4}(4!)^3\left(\frac{3}{8!4!4!}+\frac{3}{6!6!4!}\right)+\binom{5}{5}(4!)^4\left(\frac{4}{8!4!4!4!}+\frac{6}{6!6!4!4!}\right)\right)\\
&=\frac{13}{1050}\,,\\
E_{3,4}&=4!\left(-\binom{5}{2}6!\frac{1}{10!}+\binom{5}{3}(6!)^2\left(\frac{2}{10!6!}+\frac{1}{8!8!}\right)\right.\\
&\qquad\left.-\binom{5}{4}(6!)^3\left(\frac{3}{10!6!6!}+\frac{3}{8!8!6!}\right)+\binom{5}{5}(6!)^4\left(\frac{4}{10!6!6!6!}+\frac{6}{8!8!6!6!}\right)\right)\\
&=\frac{17}{5880}\,.\\
\end{align*}
\bigskip

\section{Some hypergeometric Euler numbers}

If $N=1$, we have the following relation between hypergeometric Euler numbers and Bernoulli numbers.

\begin{theorem}
For $n\ge 1$ we have
$$
E_{1,n}=-(n-1)B_n\,.
$$
\label{th22}
\end{theorem}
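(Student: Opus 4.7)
The plan is to exploit the identity (\ref{def2:hypergeuler}) which, specialized to $N=1$, gives
$$
{}_1 F_2\bigl(1;2,3/2;t^2/4\bigr)=\frac{2(\cosh t-1)}{t^2}\,,
$$
so that the generating function for $E_{1,n}$ becomes
$$
\sum_{n=0}^\infty E_{1,n}\frac{t^n}{n!}=\frac{t^2}{2(\cosh t-1)}\,.
$$
The goal is then to recognize the right-hand side as a simple combination of the Bernoulli generating function $B(t):=t/(e^t-1)$ and its derivative, after which matching coefficients of $t^n/n!$ will yield the claim.

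The key algebraic step is the identity $2(\cosh t-1)=e^t+e^{-t}-2=e^{-t}(e^t-1)^2$, which rewrites
$$
\frac{t^2}{2(\cosh t-1)}=\frac{t^2 e^t}{(e^t-1)^2}\,.
$$
Next, I differentiate $B(t)=t/(e^t-1)$ directly via the quotient rule to obtain
$$
B'(t)=\frac{1}{e^t-1}-\frac{t e^t}{(e^t-1)^2}\,,
$$
and multiplying the rearrangement $\frac{t e^t}{(e^t-1)^2}=\frac{1}{e^t-1}-B'(t)$ by $t$ gives
$$
\frac{t^2 e^t}{(e^t-1)^2}=\frac{t}{e^t-1}-t\,B'(t)=B(t)-t\,B'(t)\,.
$$

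Finally, I use $B(t)=\sum_{n\ge 0}B_n t^n/n!$, so that $tB'(t)=\sum_{n\ge 1}n B_n t^n/n!$. Subtracting gives $B(t)-tB'(t)=B_0+\sum_{n\ge 1}(1-n)B_n t^n/n!$, and comparing with $\sum_{n\ge 0} E_{1,n}t^n/n!$ yields $E_{1,0}=B_0=1$ (consistent with the $n=0$ case lying outside the claim) and $E_{1,n}=-(n-1)B_n$ for $n\ge 1$, as required.

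There is no real obstacle here; the only thing to be careful about is the bookkeeping in the differentiation step (making sure the index shift $tB'(t)=\sum n B_n t^n/n!$ is correct) and noting that the claim is consistent with $E_{1,n}=0$ for odd $n\ge 3$ since $B_n=0$ for such $n$, while the $n=1$ case recovers $E_{1,1}=0=-0\cdot B_1$.
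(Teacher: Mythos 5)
Your proof is correct, and it takes a genuinely different route from the paper. You work directly with the generating function: specializing (\ref{def2:hypergeuler}) to $N=1$ gives $\sum_{n\ge 0}E_{1,n}t^n/n!=\tfrac{t^2}{2(\cosh t-1)}=\tfrac{t^2e^t}{(e^t-1)^2}$, which you then identify as $B(t)-tB'(t)$ with $B(t)=t/(e^t-1)$; comparing coefficients finishes the job in one stroke (and even covers $n=0$). The paper instead proves the identity indirectly: it invokes the recurrence of Proposition \ref{prop1} for $N=1$, which determines $E_{1,n}$ uniquely, and then uses a separate generating-function computation (Lemma \ref{lem11}) to verify that the candidate sequence $-(n-1)B_n$ satisfies the same recurrence. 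Both arguments ultimately rest on manipulating $e^x/(e^x-1)^2$-type expressions, but yours avoids the detour through the recurrence and the uniqueness argument, making the logical structure shorter and more transparent; the paper's route has the mild advantage of staying within the recurrence framework it uses throughout (and Lemma \ref{lem11} is stated as a standalone Bernoulli identity of independent interest). Your sanity checks --- that odd $n\ge 3$ gives $0=-(n-1)B_n$ since $B_n=0$, and that $n=1$ gives $0$ because of the factor $n-1$ rather than because of $B_1$ --- are exactly the right things to verify.
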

\begin{proof}
The result is clear for $n=0,1$ and odd numbers $n$.
By using the following Lemma \ref{lem11} and Proposition \ref{prop1}, we get the result.
\end{proof}

\begin{Lem}
For $n\ge 1$ we have
$$
\sum_{i=0}^n\frac{(i-1)B_i}{(n-i+2)!i!}=
\begin{cases}
0&\text{if $n$ is even};\\
-B_{n+1}/n!&\text{if $n$ is odd}.
\end{cases}
$$
\label{lem11}
\end{Lem}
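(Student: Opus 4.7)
The plan is to multiply through by $(n+2)!$ and reduce everything to the classical Bernoulli recurrence $\sum_{k=0}^{m-1}\binom{m}{k}B_k=0$ (valid for $m\ge 2$), invoked at the two levels $m=n+1$ and $m=n+2$. Denoting the left-hand side by $S_n$, I have $(n+2)!\,S_n=\sum_{i=0}^n(i-1)\binom{n+2}{i}B_i$. Splitting $(i-1)\binom{n+2}{i}=i\binom{n+2}{i}-\binom{n+2}{i}$ and using the absorption identity $i\binom{n+2}{i}=(n+2)\binom{n+1}{i-1}$ rewrites this as
$$(n+2)!\,S_n=(n+2)\sum_{k=1}^n\binom{n+1}{k-1}B_k-\sum_{i=0}^n\binom{n+2}{i}B_i.$$
The second sum equals $-(n+2)B_{n+1}$ immediately from the recurrence at $m=n+2$, after peeling off the missing $k=n+1$ term.

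For the first sum I would apply Pascal's identity $\binom{n+1}{k-1}=\binom{n+2}{k}-\binom{n+1}{k}$, which splits it as a difference of two accessible sums. The $m=n+2$ recurrence gives $\sum_{k=1}^n\binom{n+2}{k}B_k=-1-(n+2)B_{n+1}$ (after peeling off both the $k=0$ term $B_0=1$ and the $k=n+1$ term), and the $m=n+1$ recurrence (valid since $n\ge 1$) gives $\sum_{k=1}^n\binom{n+1}{k}B_k=-1$. Subtracting yields $\sum_{k=1}^n\binom{n+1}{k-1}B_k=-(n+2)B_{n+1}$.

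Combining these ingredients produces $(n+2)!\,S_n=-(n+2)(n+1)B_{n+1}$, that is, $S_n=-B_{n+1}/n!$ for every $n\ge 1$. The case split in the lemma is then automatic: for even $n\ge 2$ the index $n+1$ is an odd integer $\ge 3$, so $B_{n+1}=0$; for odd $n$, the expression $-B_{n+1}/n!$ is exactly the claimed value. I expect the only real obstacle to be the careful bookkeeping of the two Bernoulli recurrences after Pascal's identity is applied; once those are set up, everything else is routine algebra.
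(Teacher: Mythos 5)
Your proof is correct, and it takes a genuinely different route from the paper. The paper argues by generating functions: it computes $\sum_{n\ge 0}S_nx^n$ (where $S_n$ denotes the left-hand side) as a Cauchy product, simplifies it to $\frac{e^x(x+1-e^x)}{(e^x-1)^2}$, and separately identifies this expression as $-\frac12-\sum_{n\ge 0}B_{2n+2}\frac{x^{2n+1}}{(2n+1)!}$ by differentiating $\frac{x}{e^x-1}$, then compares coefficients. You instead work coefficient by coefficient, reducing everything to the classical recurrence $\sum_{k=0}^{m-1}\binom{m}{k}B_k=0$ invoked at the two levels $m=n+1$ and $m=n+2$ via absorption and Pascal's identity; the bookkeeping checks out, and the final identity $(n+2)!\,S_n=-(n+2)(n+1)B_{n+1}$ is right (one can verify $n=1,2,3$ directly). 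A nice feature of your argument is that it yields the uniform closed form $S_n=-B_{n+1}/n!$ for all $n\ge 1$, with the even case falling out of the vanishing of the odd-indexed Bernoulli numbers, and it avoids any manipulation of power series or derivatives; the paper's route is more compact on the page but requires care with the $B_1$ term and the differentiation step. Both proofs rest ultimately on the same underlying fact (the defining recurrence of the Bernoulli numbers), packaged differently.
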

\begin{proof}
Firstly,
\begin{align*}
&\sum_{n=0}^\infty\sum_{i=0}^n\frac{(i-1)B_i}{(n-i+2)!i!}x^n\\
&=\left(\sum_{k=0}^\infty\frac{x^k}{(k+2)!}\right)\left(\sum_{i=0}^\infty(i-1)B_i\frac{x^i}{i!}\right)\\
&=\left(\frac{1}{x^2}\sum_{k=0}^\infty\frac{x^{k+2}}{(k+2)!}\right)\left(-2\sum_{i=0}^\infty B_i\frac{x^i}{i!}+\frac{d}{d x}\sum_{i=0}^\infty B_i\frac{x^{i+1}}{i!}\right)\\
&=\frac{e^x-1-x}{x^2}\left(-\frac{2 x}{e^x-1}+\frac{2 x(e^x-1)-x^2 e^x}{(e^x-1)^2}\right)\\
&=\frac{e^x(x+1-e^x)}{(e^x-1)^2}\,.
\end{align*}
On the other hand,
\begin{align*}
&-\frac{1}{2}-\sum_{n=0}^\infty B_{2 n+2}\frac{x^{2 n+1}}{(2 n+1)!}\\
&=-\frac{1}{2}-\frac{d}{d x}\left(\sum_{n=0}^\infty B_n\frac{x^n}{n!}-B_0-B_1 x\right)\\
&=-\frac{1}{2}-\frac{d}{d x}\left(\frac{x}{e^x-1}-1+\frac{x}{2}\right)\\
&=\frac{e^x(x+1-e^x)}{(e^x-1)^2}\,.
\end{align*}
Comparing the coefficients of $x^n$, we get the result.
\end{proof}

\section{Sums of products of hypergeometric Euler numbers}

It is known that
$$
\sum_{i=0}^n\binom{2 n}{2 i}E_{2 i}=0
$$
with $E_0=1$, and $E_{2 i-1}=0$ ($i\ge 1$).

First, let us consider the sums of products of hypergeometric Euler numbers:
$$
Y_{N, 2}(n)=\sum_{i=0}^n\binom{2 n}{2 i}E_{N,2 i}E_{N, 2 n-2 i}\,.
$$
It is clear that
$$
\sum_{i=0}^n\binom{n}{i}E_{N,i}E_{N, n-i}=0
$$
if $n$ is odd.

If $N=0$, then
$$
Y_{0,2}(n)=\frac{2^{2 n+2}(2^{2 n+2}-1)B_{2 n+2}}{2 n+2}\quad(n\ge 0)\,.
$$
Indeed,
$$
\{Y_{0,2}(n)\}_{n\ge 0}=1,-2,16, -272, 7936, -353792, 22368256, -1903757312, \dots.
$$
The numbers taking their absolute value are called the {\it tangent numbers} or the {\it zag numbers} (\cite[A000182]{oeis}).
Thus, we also have
$$
Y_{0,2}(n)=\sum_{k=1}^{2 n+2}\sum_{j=0}^k\binom{k}{j}\frac{(-1)^{j+1}(k-2 j)^{2 n+2}}{2^k\sqrt{-1}^k k}\,.
$$
In other words, they appear as numerators in the Maclaurin series of $\tan x$:
$$
\tan x=\sum_{n=0}^\infty(-1)^n Y_{0,2}(n)\frac{x^{2 n+1}}{( 2n+1)!}\,.
$$

Put
\begin{align*}
F:&={}_1 F_2\bigl(1;N+1,\frac{2 N+1}{2};\frac{t^2}{4}\bigr)\\
&=\sum_{n=0}^\infty\frac{(2 N)!}{(2 N+2 n)!}t^{2 n}
\end{align*}
for simplicity again.
Then by
$$
\frac{d}{d t}F=\sum_{n=0}^\infty\frac{(2 n)(2 N)!}{(2 N+2 n)!}t^{2 n-1}\,,
$$
we have
\begin{equation}
2 N F+t\frac{t}{d t}F=2 N\cdot{}_1 F_2\bigl(1;N,\frac{2 N+1}{2};\frac{t^2}{4}\bigr)\,.
\label{eq401}
\end{equation}
For further simplicity, we put for $k=1,2,\dots,2 N$
$$
F_{(2 N-k)}={}_1 F_2\left(1;\fl{\frac{k+2}{2}},\fl{\frac{k+1}{2}}+\frac{1}{2};\frac{t^2}{4}\right)
$$
with $F_{(0)}=F$.
Then,
in general, we obtain for $k=1,2,\dots,2 N$
\begin{equation}
k F_{(2 N-k)}+t\frac{d}{d t}F_{(2 N-k)}=k F_{(2 N-k+1)}\,.
\label{eq50}
\end{equation}

\begin{Prop}
For $k=0,1,\dots,2 N$ we have
$$
\cosh t=\sum_{i=0}^k\frac{t^i}{i!}\binom{k}{i}\frac{d^i}{d t^i}F_{(2 N-k)}\,.
$$
\end{Prop}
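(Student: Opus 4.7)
The natural approach is induction on $k$, using the recurrence (\ref{eq50}) to climb the chain $F_{(2N)},F_{(2N-1)},\ldots,F_{(0)}$. Increasing $k$ by one amounts, via (\ref{eq50}), to inverting the operator $f\mapsto f+(t/k)f'$; differentiated $i$ times this produces both an $i$-th and an $(i+1)$-st derivative of $F_{(2N-k)}$, and a Pascal-type identity absorbs the discrepancy to turn $\binom{k-1}{\cdot}$ into $\binom{k}{\cdot}$.

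\textbf{Base case $k=0$.} Although the text defines $F_{(2N-k)}$ only for $k\ge 1$, plugging $k=0$ into the same formula yields the natural extension $F_{(2N)}={}_1F_2(1;1,\tfrac{1}{2};t^2/4)$. Using the classical identity $(1/2)^{(n)}=(2n)!/(4^n n!)$, the series collapses to $\sum_{n\ge 0}t^{2n}/(2n)!=\cosh t$, which is exactly the $k=0$ assertion.

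\textbf{Inductive step.} Assuming the formula at level $k-1$, rewrite (\ref{eq50}) as $F_{(2N-k+1)}=F_{(2N-k)}+(t/k)\,F_{(2N-k)}'$ and differentiate $i$ times by Leibniz to obtain
$$F_{(2N-k+1)}^{(i)}=\frac{k+i}{k}F_{(2N-k)}^{(i)}+\frac{t}{k}F_{(2N-k)}^{(i+1)}.$$
Substitute this into the induction hypothesis $\cosh t=\sum_{i=0}^{k-1}\frac{t^i}{i!}\binom{k-1}{i}F_{(2N-k+1)}^{(i)}$, split into two sums, and reindex $i\mapsto i-1$ in the one containing $F_{(2N-k)}^{(i+1)}$. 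The coefficient of $\frac{t^i}{i!}F_{(2N-k)}^{(i)}$ then becomes $\binom{k-1}{i}\frac{k+i}{k}+\binom{k-1}{i-1}\frac{i}{k}$, with the convention $\binom{k-1}{k}=\binom{k-1}{-1}=0$ at the endpoints. Writing $\binom{k-1}{i}=\frac{k-i}{k}\binom{k}{i}$ and $\binom{k-1}{i-1}=\frac{i}{k}\binom{k}{i}$, the algebraic identity $(k-i)(k+i)+i^{2}=k^{2}$ collapses this coefficient to $\binom{k}{i}$, yielding the claim at level $k$.

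\textbf{Expected obstacle.} There is no conceptual difficulty: the only real bookkeeping issue is the reindexing in the inductive step and verifying that the two boundary values $i=0$ and $i=k$ are produced correctly (they come out of the pure $\binom{k-1}{i}$ and pure $\binom{k-1}{i-1}$ terms, respectively). The recurrence (\ref{eq50}), Leibniz's rule, and Pascal-type collapsing of binomial coefficients do all of the work.
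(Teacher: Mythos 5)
Your proof is correct and follows essentially the same route as the paper's: induction on $k$ with base case $F_{(2N)}=\cosh t$, using the recurrence (\ref{eq50}) together with Leibniz's rule and a Pascal-type collapse of the binomial coefficients (your identity $(k-i)(k+i)+i^2=k^2$ is exactly the paper's combination of the $\tfrac{k+i+1}{k+1}$ and shifted terms into $\binom{k+1}{i}$). The only difference is cosmetic indexing — you step from $k-1$ to $k$ while the paper steps from $k$ to $k+1$.
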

\begin{proof}
For $k=0$, we get
$$
F_{(2 N)}=\sum_{n=0}^\infty\frac{t^{2 n}}{(2 n)!}=\cosh t\,.
$$
Assume that the result holds for some $k\ge 0$.  Then by (\ref{eq50})
\begin{align*}
&\sum_{i=0}^k\frac{t^i}{i!}\binom{k}{i}\frac{d^i}{d t^i}F_{(2 N-k)}\\
&=\sum_{i=0}^k\frac{t^i}{i!}\binom{k}{i}\frac{d^i}{d t^i}\left(F_{(2 N-k-1)}+\frac{t}{k+1}\frac{d}{d t}F_{(2 N-k-1)}\right)\\
&=\sum_{i=0}^k\frac{t^i}{i!}\binom{k}{i}\\
&\qquad\times\left(\frac{d^i}{d t^i}F_{(2 N-k-1)}+\frac{i}{k+1}\frac{d^i}{d t^i}F_{(2 N-k-1)}+\frac{t}{k+1}\frac{d^{i+1}}{d t^{i+1}}F_{(2 N-k-1)}\right)\\
&=\sum_{i=0}^k\frac{t^i}{i!}\binom{k}{i}\frac{k+i+1}{k+1}\frac{d^i}{d t^i}F_{(2 N-k-1)}\\
&\qquad +\sum_{i=1}^{k+1}\frac{t^{i-1}}{(i-1)!}\binom{k}{i-1}\frac{t}{k+1}\frac{d^i}{d t^i}F_{(2 N-k-1)}\\
&=\sum_{i=0}^{k+1}\frac{t^i}{i!}\binom{k+1}{i}\frac{d^i}{d t^i}F_{(2 N-k-1)}\,.
\end{align*}
\end{proof}

We introduce the {\it complementary hypergeometric Euler numbers} $\widehat E_{N,n}$ by
$$
\frac{t^{2N+1}/(2 N+1)!}{\sinh t-\sum_{n=0}^{N-1}t^{2 n+1}/(2 n+1)!}=\sum_{n=0}^\infty\widehat E_{N,n}\frac{t^n}{n!}
$$
as an analogue of (\ref{def1:hypergeuler}). When $n=0$,
$\widehat E_n=\widehat E_{0,n}$ are the {\it complementary Euler numbers} defined by
$$
\frac{t}{\sinh t}=\sum_{n=0}^\infty\widehat E_n\frac{t^n}{n!}
$$
as an analogue of (\ref{def:euler}).  In \cite{KP}, they are called {\it weighted Bernoulli numbers}, but this naming means different in other literatures.
Since
\begin{align*}
F^\ast:&={}_1 F_2\bigl(1;N,\frac{2 N+1}{2};\frac{t^2}{4}\bigr)\\
&=\sum_{n=0}^\infty\frac{(2 N-1)!}{(2 N+2 n-1)!}t^{2 n}
\end{align*}
and
\begin{equation}
\frac{d}{d t}F=-F^2\frac{d}{d t}\frac{1}{F}\,,
\label{eq44}
\end{equation}
by (\ref{eq401})
we have
\begin{equation}
\frac{1}{F^2}=\frac{1}{F^\ast}\left(\frac{1}{F}-\frac{t}{2 N}\frac{d}{d t}\frac{1}{F}\right)\,.
\label{eq402}
\end{equation}
Since
\begin{align*}
\frac{1}{F^\ast}&=\frac{t^{2 N-1}}{(2 N-1)!\sum_{n=0}^\infty t^{2 N+2 n-1}/(2 N+ 2 n-1)!}\\
&=\sum_{n=0}^\infty\widehat E_{N-1,n}\frac{t^n}{n!}
\end{align*}
and
\begin{align*}
\frac{1}{F}-\frac{t}{2 N}\frac{d}{d t}\frac{1}{F}
&=\sum_{n=0}^\infty E_{N,n}\frac{t^n}{n!}-\frac{t}{2 N}\sum_{n=1}^\infty E_{N,n}\frac{t^{n-1}}{(n-1)!}\\
&=\sum_{n=0}^\infty\frac{2 N-n}{2 N}E_{N,n}\frac{t^n}{n!}\,,
\end{align*}
we have
\begin{align*}
\frac{1}{F^\ast}\left(\frac{1}{F}-\frac{t}{2 N}\frac{d}{d t}\frac{1}{F}\right)
&=\left(\sum_{m=0}^\infty\widehat E_{N-1,m}\frac{t^m}{m!}\right)\left(\sum_{k=0}^\infty\frac{2 N-k}{2 N}E_{N,k}\frac{t^k}{k!}\right)\\
&=\sum_{n=0}^\infty\sum_{k=0}^n\binom{n}{k}\frac{2 N-k}{2 N}E_{N,k}\widehat E_{N-1,n-k}\frac{t^n}{n!}\,.
\end{align*}
Comparing the coefficients, we obtain a result about the sums of products.

\begin{theorem}
For $N\ge 1$ and $n\ge 0$,
$$
\sum_{i=0}^n\binom{n}{i}E_{N,i}E_{N,n-i}
=\sum_{k=0}^n\binom{n}{k}\frac{2 N-k}{2 N}E_{N,k}\widehat E_{N-1,n-k}\,.
$$
\label{th:sumprod2}
\end{theorem}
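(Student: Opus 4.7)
The plan is to read off the statement as the equality of Taylor coefficients in the identity (\ref{eq402}), which the author has already established in the text preceding the theorem. All of the analytic work---the derivation of (\ref{eq401}) from the series defining $F$, the quotient rule (\ref{eq44}), and their combination into (\ref{eq402})---is already in place, so what remains is purely formal expansion and comparison of series.

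First I would expand the left-hand side of (\ref{eq402}) as a Cauchy product. From the definition (\ref{def1:hypergeuler}) we have $1/F=\sum_{i=0}^\infty E_{N,i}t^i/i!$, and squaring the series gives
\[
\frac{1}{F^2}=\sum_{n=0}^\infty\left(\sum_{i=0}^n\binom{n}{i}E_{N,i}E_{N,n-i}\right)\frac{t^n}{n!},
\]
so the coefficient of $t^n/n!$ in $1/F^2$ is exactly the left-hand side of the claimed identity.

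Next I would handle the right-hand side of (\ref{eq402}). The two power series expansions
\[
\frac{1}{F^\ast}=\sum_{m=0}^\infty\widehat E_{N-1,m}\frac{t^m}{m!},\qquad \frac{1}{F}-\frac{t}{2N}\frac{d}{dt}\frac{1}{F}=\sum_{k=0}^\infty\frac{2N-k}{2N}E_{N,k}\frac{t^k}{k!}
\]
have both already been written down in the passage above, so their Cauchy product is
\[
\sum_{n=0}^\infty\left(\sum_{k=0}^n\binom{n}{k}\frac{2N-k}{2N}E_{N,k}\widehat E_{N-1,n-k}\right)\frac{t^n}{n!}.
\]
Equating the coefficients of $t^n/n!$ on the two sides of (\ref{eq402}) yields the theorem.

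There is no substantive obstacle remaining: the conceptual step is the differential-algebraic identity (\ref{eq402}) itself, resting on the recurrence (\ref{eq401}) for the hypergeometric series $F$. The only care needed in the bookkeeping is the index shift $N\mapsto N-1$ in recognising $1/F^\ast$ as the generating function of $\widehat E_{N-1,m}$, together with the identification of $\frac{2N-k}{2N}E_{N,k}$ as the $k$-th coefficient of the operator $1-\frac{t}{2N}\frac{d}{dt}$ applied to $1/F$.
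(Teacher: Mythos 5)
Your proposal is correct and follows essentially the same route as the paper: the paper's "proof" of this theorem is precisely the displayed computation preceding it, namely expanding $1/F^2$ via the Cauchy product of $1/F$ with itself and expanding the right side of (\ref{eq402}) as the product of $1/F^\ast=\sum_m\widehat E_{N-1,m}t^m/m!$ with $\sum_k\frac{2N-k}{2N}E_{N,k}t^k/k!$, then comparing coefficients of $t^n/n!$. Nothing is missing.
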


Using (\ref{eq44}) and (\ref{eq402}) again,
we have
\begin{align*}
\frac{1}{F^3}&=\frac{1}{F^\ast}\left(\frac{1}{F^2}-\frac{t}{2 N}\frac{1}{F}\frac{d}{d t}\frac{1}{F}\right)\\
&=\frac{1}{F^\ast}\left(\frac{1}{F^2}-\frac{t}{4 N}\frac{d}{d t}\frac{1}{F^2}\right)\,.
\end{align*}
Since
$$
\frac{1}{F^2}-\frac{t}{4 N}\frac{d}{d t}\frac{1}{F^2}
=\sum_{n=0}^\infty\frac{4 N-n}{4 N}\sum_{k=0}^n\binom{n}{k}\frac{2 N-k}{2 N}E_{N,k}\widehat E_{N-1,n-k}\frac{t^n}{n!}\,,
$$
we have
\begin{align*}
&\frac{1}{F^\ast}\left(\frac{1}{F^2}-\frac{t}{4 N}\frac{d}{d t}\frac{1}{F^2}\right)\\
&=\left(\sum_{i=0}^\infty\widehat E_{N-1,i}\frac{t^i}{i!}\right)\left(\sum_{m=0}^\infty\frac{4 N-m}{4 N}\sum_{k=0}^m\binom{m}{k}\frac{2 N-k}{2 N}E_{N,k}\widehat E_{N-1,m-k}\frac{t^m}{m!}\right)\\
&=\sum_{n=0}^\infty\sum_{m=0}^n\sum_{k=0}^m\binom{n}{m}\binom{m}{k}\frac{(4 N-m)(2 N-k)}{8 N^2}E_{N,k}\widehat E_{N-1,n-m}\widehat E_{N-1,m-k}\frac{t^n}{n!}\,,
\end{align*}
Comparing the coefficients, we get a result about the sums of products for trinomial coefficients.

\begin{theorem}
For $N\ge 1$ and $n\ge 0$,
\begin{multline*}
\sum_{i_1+i_2+i_3=n\atop i_1,i_2,i_3\ge 0}\binom{n}{i_1, i_2, i_3}E_{N,i_1}E_{N,i_2}E_{N,i_3}\\
=\sum_{m=0}^n\sum_{k=0}^m\binom{n}{m}\binom{m}{k}\frac{(4 N-m)(2 N-k)}{8 N^2}E_{N,k}\widehat E_{N-1,n-m}\widehat E_{N-1,m-k}\,.
\end{multline*}
\label{th:sumprod3}
\end{theorem}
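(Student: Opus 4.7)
The proof will follow the same template as Theorem \ref{th:sumprod2}, but apply the ``lowering'' identity (\ref{eq402}) one more time so that $1/F^3$ gets expressed in terms of $1/F^\ast$ and $1/F^2$. The starting point, already derived in the excerpt, is
\begin{equation*}
\frac{1}{F^3}=\frac{1}{F^\ast}\left(\frac{1}{F^2}-\frac{t}{4 N}\frac{d}{d t}\frac{1}{F^2}\right).
\end{equation*}
The left-hand side is the cube of $1/F=\sum_n E_{N,n}t^n/n!$, so by the Cauchy product (trinomial convolution) its coefficient of $t^n/n!$ is exactly the sum on the left side of the theorem. The work therefore reduces to matching the coefficient of $t^n/n!$ on the right-hand side.

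First I would compute the series expansion of $\frac{1}{F^2}-\frac{t}{4 N}\frac{d}{d t}\frac{1}{F^2}$. By Theorem \ref{th:sumprod2},
\begin{equation*}
\frac{1}{F^2}=\sum_{m=0}^\infty\left(\sum_{k=0}^m\binom{m}{k}\frac{2 N-k}{2 N}E_{N,k}\widehat E_{N-1,m-k}\right)\frac{t^m}{m!}.
\end{equation*}
Writing $A_m$ for the inner parenthesis, the operator $1-\frac{t}{4 N}\frac{d}{d t}$ sends $t^m/m!$ to $\frac{4 N-m}{4 N}\cdot t^m/m!$, hence
\begin{equation*}
\frac{1}{F^2}-\frac{t}{4 N}\frac{d}{d t}\frac{1}{F^2}=\sum_{m=0}^\infty\frac{4 N-m}{4 N}A_m\frac{t^m}{m!}.
\end{equation*}

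Second, I would multiply by $1/F^\ast=\sum_i\widehat E_{N-1,i}t^i/i!$ and read off the $t^n/n!$ coefficient via the standard Cauchy product. Substituting the explicit formula for $A_m$ produces the triple sum
\begin{equation*}
\sum_{m=0}^n\sum_{k=0}^m\binom{n}{m}\binom{m}{k}\frac{(4 N-m)(2 N-k)}{8 N^2}E_{N,k}\widehat E_{N-1,n-m}\widehat E_{N-1,m-k},
\end{equation*}
which is precisely the right-hand side of the theorem. Equating this to the trinomial convolution from the left-hand side finishes the proof.

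The only nontrivial checkpoint is the derivation of the displayed identity for $1/F^3$, but this is already given in the excerpt via two applications of (\ref{eq402}) together with (\ref{eq44}); everything after that is bookkeeping of Cauchy products and the scalar action of the Euler-type operator $1-\frac{t}{4 N}\frac{d}{d t}$. So there is no real analytical obstacle; the main care goes into keeping the indices $i_1,i_2,i_3$ on the left and $k, m-k, n-m$ on the right consistent, and into verifying that the factor $(4 N-m)(2 N-k)/(8 N^2)$ comes out correctly from the product of the two scalar factors $(2 N-k)/(2 N)$ (from Theorem \ref{th:sumprod2}) and $(4 N-m)/(4 N)$ (from the extra $1-\frac{t}{4 N}\frac{d}{d t}$).
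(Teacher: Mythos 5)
Your proposal is correct and follows essentially the same route as the paper: it starts from the identity $\frac{1}{F^3}=\frac{1}{F^\ast}\bigl(\frac{1}{F^2}-\frac{t}{4N}\frac{d}{dt}\frac{1}{F^2}\bigr)$, uses the expansion of $1/F^2$ from Theorem \ref{th:sumprod2}, notes that $1-\frac{t}{4N}\frac{d}{dt}$ scales the coefficient of $t^m/m!$ by $\frac{4N-m}{4N}$, and finishes with a Cauchy product against $1/F^\ast$. No gaps.
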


\subsection{Complementary hypergeometric Euler numbers}

By using the similar methods in previous sections, the complementary hypergeometric Euler numbers satisfy the recurrence relation for even $n$
$$
\sum_{i=0}^{n/2}\frac{\widehat E_{N,2 i}}{(2 N+n-2 i+1)!(2 i)!}=0
$$
or
$$
\widehat E_{N, n}=-n!(2 N+1)!\sum_{i=0}^{n/2-1}\frac{\widehat E_{N,2 i}}{(2 N+n-2 i+1)!(2 i)!}\,.
$$

By using the Hasse-Teichm\"uller derivative or by proving by induction, we have

\begin{theorem}
For $N\ge 0$ and $n\ge 1$ we have
\begin{align*}
\widehat E_{N,n}&=n!\sum_{k=1}^n(-1)^k\sum_{i_1,\dots,i_k\ge 1\atop i_1+\cdots+i_k=n/2}\frac{\bigl((2 N+1)!\bigr)^k}{(2 N+2 i_1+1)!\cdots(2 N+2 i_k+1)!}\\
&=n!\sum_{k=1}^n(-1)^k\binom{n+1}{k+1}\sum_{i_1,\dots,i_k\ge 0\atop i_1+\cdots+i_k=n/2}\frac{\bigl((2 N+1)!\bigr)^k}{(2 N+2 i_1+1)!\cdots(2 N+2 i_k+1)!}\,.
\end{align*}
\end{theorem}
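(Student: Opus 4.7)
The plan is to reprise, with the obvious adjustments, the Hasse--Teichm\"uller derivative argument that yielded the alternative proof of Theorem~\ref{th1} and Proposition~\ref{th_euler2}. Factoring $t^{2N+1}$ out of the tail of $\sinh t$ exactly as in~(\ref{def2:hypergeuler}) gives
$$
\sinh t-\sum_{n=0}^{N-1}\frac{t^{2n+1}}{(2n+1)!}=\frac{t^{2N+1}}{(2N+1)!}\sum_{n=0}^\infty\frac{(2N+1)!}{(2N+2n+1)!}t^{2n},
$$
so if I set
$$
G:=\sum_{n=0}^\infty\frac{(2N+1)!}{(2N+2n+1)!}t^{2n},
$$
then the defining generating function of $\widehat E_{N,n}$ collapses to $1/G=\sum_{n=0}^\infty\widehat E_{N,n}t^n/n!$, whence $\widehat E_{N,n}/n!=\left.H^{(n)}(1/G)\right|_{t=0}$.

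Next I would evaluate
$$
\left.H^{(i)}(G)\right|_{t=0}=\begin{cases}(2N+1)!/(2N+i+1)! & \text{if $i$ is even},\\ 0 & \text{if $i$ is odd},\end{cases}
$$
by the same termwise computation used for $F$ in the alternative proof of Theorem~\ref{th1}, together with $G(0)=1$. Substituting these values into the quotient rule~(\ref{quotientrule1}) of Lemma~\ref{quotientrules} directly yields the first displayed formula: the parity constraint forces every $i_j$ to be even, so writing $i_j=2\ell_j$ converts the condition $i_1+\cdots+i_k=n$ into $\ell_1+\cdots+\ell_k=n/2$ with $\ell_j\ge 1$ (and in particular $n$ must be even, otherwise the whole sum collapses to $0=\widehat E_{N,n}$, consistent with odd-index vanishing). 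Plugging the same values into~(\ref{quotientrule2}) instead yields the second displayed formula, the binomial factor $\binom{n+1}{k+1}$ coming along for the ride and the index set now permitting $\ell_j=0$.

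I expect no substantive obstacle. The only items demanding care are the factoring of $t^{2N+1}$ out of the denominator to produce $G$, and the bookkeeping of the parity constraint on the $i_j$, both of which are mechanical once the template from Theorem~\ref{th1} is in hand. An entirely parallel inductive route, alluded to in the statement, goes through the recurrence $\widehat E_{N,n}=-n!(2N+1)!\sum_{i=0}^{n/2-1}\widehat E_{N,2i}/\bigl((2N+n-2i+1)!(2i)!\bigr)$ displayed just above the theorem; that argument is a verbatim transcription of the induction proving Theorem~\ref{th1}, with $(2N)!$ replaced throughout by $(2N+1)!$ and $(2N+2i)!$ replaced by $(2N+2i+1)!$.
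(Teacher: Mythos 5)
Your proposal is correct and follows exactly the route the paper intends: the paper gives no written-out proof, merely remarking that the result follows ``by using the Hasse--Teichm\"uller derivative or by proving by induction,'' and your argument is the straightforward transcription of the alternative proof of Theorem~\ref{th1} and Proposition~\ref{th_euler2} with $F$ replaced by $\widehat F$ and $(2N)!/(2N+i)!$ replaced by $(2N+1)!/(2N+i+1)!$. The parity bookkeeping and the factoring of $t^{2N+1}$ are handled correctly, so there is nothing to add.
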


Some initial values of $\widehat E_{N,n}$ ($n=0,2,4,\dots$), we have
{\small
\begin{align*}
\widehat E_{N,2}&=-\frac{2}{(2N+2)(2N+3)}\,,\\
\widehat E_{N,4}&=\frac{2\cdot 4!(4 N+7)}{(2 N+2)^2(2 N+3)^2(2 N+4)(2 N+5)}\,,\\
\widehat E_{N,6}&=\frac{4\cdot 6!(8 N^3+10 N^2-61 N-93))}{(2 N+2)^3(2 N+3)^3(2 N+4)(2 N+5)(2 N+6)(2 N+7)}\,,\\
\widehat E_{N,8}&=\frac{8\cdot 8!}{(2 N+2)^4(2 N+3)^4(2 N+4)^2(2 N+5)^2(2 N+7)(2 N+8)(2 N+9)}\\
&\quad\times(32 N^6+8 N^5-1132 N^4-3538 N^3-1063 N^2+7280 N+6858)\,.
\end{align*}

Put
$$
\widehat F=\sum_{n=0}^\infty\frac{(2 N+1)!}{(2 N+2 n+1)!}t^{2 n}
$$
so that
$$
\frac{1}{\widehat F}=\sum_{n=0}^\infty\widehat E_{N,n}\frac{t^n}{n!}\,.
$$
Since
$$
(2 N+1)\widehat F+t\frac{d}{d t}\widehat F=(2 N+1)F\,,
$$
we have
\begin{align*}
\frac{1}{\widehat F^2}&=\frac{1}{F}\left(\frac{1}{\widehat F}-\frac{t}{2 N+1}\frac{d}{d t}\frac{1}{\widehat F}\right)\\
&=\left(\sum_{m=0}^\infty E_{N,m}\frac{t^m}{m!}\right)\left(\sum_{k=0}^\infty\frac{2 N-k+1}{2 N+1}\widehat E_{N,k}\frac{t^k}{k!}\right)\\
&=\sum_{n=0}^\infty\sum_{k=0}^n\binom{n}{k}\frac{2 N-k+1}{2 N+1}\widehat E_{N,k}E_{N,n-k}\frac{t^n}{n!}\,.
\end{align*}

Hence, as an analogue of Theorem \ref{th:sumprod2},  we have the following.

\begin{theorem}
For $N\ge 1$ and $n\ge 0$,
$$
\sum_{i=0}^n\binom{n}{i}\widehat E_{N,i}\widehat E_{N,n-i}
=\sum_{k=0}^n\binom{n}{k}\frac{2 N-k+1}{2 N+1}\widehat E_{N,k}E_{N,n-k}\,.
$$
\label{th:sumprod2a}
\end{theorem}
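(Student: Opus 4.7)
The plan is to mirror the derivation of Theorem \ref{th:sumprod2}, interchanging the roles of $F$ and $\widehat F$. The first step is to establish the differential identity
$$
(2N+1)\widehat F + t\frac{d}{dt}\widehat F = (2N+1)F,
$$
which is the analogue of (\ref{eq401}). This drops out of termwise differentiation: in $\widehat F = \sum_{n\ge 0}\frac{(2N+1)!}{(2N+2n+1)!}t^{2n}$, the coefficient of $t^{2n}$ on the left becomes $\frac{(2N+2n+1)(2N+1)!}{(2N+2n+1)!} = \frac{(2N+1)!}{(2N+2n)!}$, matching the coefficient of $t^{2n}$ in $(2N+1)F$.

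Next, I would combine this with the elementary identity $\frac{d}{dt}\widehat F = -\widehat F^2\,\frac{d}{dt}\frac{1}{\widehat F}$ and divide by $(2N+1)\widehat F^2 F$ to obtain
$$
\frac{1}{\widehat F^2} = \frac{1}{F}\left(\frac{1}{\widehat F} - \frac{t}{2N+1}\,\frac{d}{dt}\frac{1}{\widehat F}\right),
$$
the analogue of (\ref{eq402}). Because $\frac{t}{2N+1}\frac{d}{dt}\frac{1}{\widehat F} = \sum_{k\ge 1}\frac{k}{2N+1}\widehat E_{N,k}\frac{t^k}{k!}$, the parenthesised factor simplifies to $\sum_{k\ge 0}\frac{2N-k+1}{2N+1}\widehat E_{N,k}\frac{t^k}{k!}$.

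Finally, I would expand both sides as formal power series in $t$ and compare coefficients of $t^n/n!$. The left-hand side is $(1/\widehat F)^2$, whose $n$-th coefficient is the convolution $\sum_{i=0}^n\binom{n}{i}\widehat E_{N,i}\widehat E_{N,n-i}$. The right-hand side is the Cauchy product of $1/F = \sum E_{N,m}t^m/m!$ with the simplified factor above, yielding $\sum_{k=0}^n\binom{n}{k}\frac{2N-k+1}{2N+1}\widehat E_{N,k}E_{N,n-k}$. Matching coefficients gives the theorem.

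The only nontrivial step is verifying the differential identity at the start; after that everything is formal manipulation paralleling the proof of Theorem \ref{th:sumprod2}, and no genuine obstacle is expected.
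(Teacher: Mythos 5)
Your proposal is correct and follows the paper's own argument essentially verbatim: the paper also derives $(2N+1)\widehat F+t\frac{d}{dt}\widehat F=(2N+1)F$, converts it into $\frac{1}{\widehat F^2}=\frac{1}{F}\bigl(\frac{1}{\widehat F}-\frac{t}{2N+1}\frac{d}{dt}\frac{1}{\widehat F}\bigr)$, and compares coefficients of the resulting Cauchy products. No gaps.
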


We then have
$$
\frac{1}{\widehat F^3}=\frac{1}{F}\left(\frac{1}{\widehat F^2}-\frac{t}{2(2 N+1)}\frac{d}{d t}\frac{1}{\widehat F^2}\right)\,.
$$
Since
$$
\frac{1}{\widehat F^2}-\frac{t}{2(2 N+1)}\frac{d}{d t}\frac{1}{\widehat F^2}
=\sum_{n=0}^\infty\frac{4 N-n+2}{2(2 N+1)}\sum_{k=0}^n\binom{n}{k}\frac{2 N-k+1}{2 N+1}\widehat E_{N,k}E_{N,n-k}\frac{t^n}{n!}\,,
$$
we have the following result as an analogue of Theorem \ref{th:sumprod3}.

\begin{theorem}
For $N\ge 1$ and $n\ge 0$,
\begin{multline*}
\sum_{i_1+i_2+i_3=n\atop i_1,i_2,i_3\ge 0}\binom{n}{i_1, i_2, i_3}\widehat E_{N,i_1}\widehat E_{N,i_2}\widehat E_{N,i_3}\\
=\sum_{m=0}^n\sum_{k=0}^m\binom{n}{m}\binom{m}{k}\frac{(4 N-m+2)(2 N-k+1)}{2(2 N+1)^2}\widehat E_{N,k}E_{N,n-m}E_{N,m-k}\,.
\end{multline*}
\label{th:sumprod3a}
\end{theorem}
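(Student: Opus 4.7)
The plan is to follow the same template that yielded Theorems \ref{th:sumprod2}, \ref{th:sumprod3}, and \ref{th:sumprod2a}: compute $1/\widehat F^3$ in two different ways and equate coefficients of $t^n/n!$. Everything needed has already been assembled in the excerpt immediately preceding the theorem statement, so what remains is a bookkeeping calculation.

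First, I would use the defining relation $1/\widehat F=\sum_{n\ge 0}\widehat E_{N,n}t^n/n!$, cube the series, and apply the trinomial expansion to obtain
\begin{equation*}
\frac{1}{\widehat F^3}=\sum_{n=0}^\infty\Biggl(\sum_{i_1+i_2+i_3=n\atop i_1,i_2,i_3\ge 0}\binom{n}{i_1,i_2,i_3}\widehat E_{N,i_1}\widehat E_{N,i_2}\widehat E_{N,i_3}\Biggr)\frac{t^n}{n!}.
\end{equation*}
This identifies $n!$ times the coefficient of $t^n$ with the left-hand side of the theorem.

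Next I would substitute into the operator identity
$\frac{1}{\widehat F^3}=\frac{1}{F}\bigl(\frac{1}{\widehat F^2}-\frac{t}{2(2N+1)}\frac{d}{dt}\frac{1}{\widehat F^2}\bigr)$
already displayed above the theorem statement, together with the explicit expansion of the parenthesized factor. Writing $1/F=\sum_{j\ge 0}E_{N,j}t^j/j!$ and forming the Cauchy product, the coefficient of $t^n/n!$ becomes
\begin{equation*}
\sum_{m=0}^n\binom{n}{m}E_{N,n-m}\cdot\frac{4N-m+2}{2(2N+1)}\sum_{k=0}^m\binom{m}{k}\frac{2N-k+1}{2N+1}\widehat E_{N,k}E_{N,m-k},
\end{equation*}
where the outer index $m$ tracks the split between the two factors in the Cauchy product and the inner index $k$ is inherited from the nested sum. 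Collecting constants into the factor $\frac{(4N-m+2)(2N-k+1)}{2(2N+1)^2}$ and matching indices to $\widehat E_{N,k}E_{N,m-k}E_{N,n-m}$ produces exactly the right-hand side of the theorem.

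The only possible pitfalls are arithmetic in the Cauchy product and the alignment of the three indices $k$, $m-k$, $n-m$ with the three factors $\widehat E_{N,k}$, $E_{N,m-k}$, $E_{N,n-m}$; both are mechanical. The genuinely substantive ingredients, namely the factorization of $1/\widehat F^3$ and the expansion of $1/\widehat F^2-\frac{t}{2(2N+1)}\frac{d}{dt}(1/\widehat F^2)$, have already been derived in the excerpt from the differential relation $(2N+1)\widehat F+t\,d\widehat F/dt=(2N+1)F$, so no additional analytic work is required. Comparing the two expressions for the coefficient of $t^n/n!$ in $1/\widehat F^3$ completes the proof.
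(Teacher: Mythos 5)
Your proposal is correct and follows exactly the paper's own route: the paper derives the identity $\frac{1}{\widehat F^3}=\frac{1}{F}\bigl(\frac{1}{\widehat F^2}-\frac{t}{2(2N+1)}\frac{d}{dt}\frac{1}{\widehat F^2}\bigr)$ from the differential relation $(2N+1)\widehat F+t\frac{d}{dt}\widehat F=(2N+1)F$, expands the parenthesized factor using Theorem \ref{th:sumprod2a}, and compares coefficients of $t^n/n!$ against the trinomial expansion of $1/\widehat F^3$. Your index bookkeeping ($k$, $m-k$, $n-m$ attached to $\widehat E_{N,k}$, $E_{N,m-k}$, $E_{N,n-m}$) matches the paper's displayed computation.
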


One can continue to obtain the sum of four or more products, though the results seem to become more complicated.

\section{Applications by the Trudi's formula}

We shall use the Trudi's formula to obtain different explicit expressions for the hypergeometric Euler numbers $E_{N,n}$.

\begin{Lem}[Trudi's formula \cite{KR,trudi}]
For a positive integer $m$, we have
\begin{multline*}
\left|
\begin{array}{ccccc}
a_1  & a_2   &  \cdots   & & a_m  \\
a_{0}  & a_{1}    &  \cdots  & &   \\
\vdots  &  \vdots &  \ddots  &  & \vdots  \\
0  & 0    &  \cdots  &a_1  & a_{2}  \\
0  & 0   &  \cdots  & a_0  & a_1
\end{array}
\right|\\
=
\sum_{t_1 + 2t_2 + \cdots +mt_m=m}\binom{t_1+\cdots + t_m}{t_1, \dots,t_m}(-a_0)^{m-t_1-\cdots - t_m}a_1^{t_1}a_2^{t_2}\cdots a_m^{t_m}, \label{trudi}
\end{multline*}
where $\binom{t_1+\cdots + t_m}{t_1, \dots,t_m}=\frac{(t_1+\cdots + t_m)!}{t_1 !\cdots t_m !}$ are the multinomial coefficients.
\label{lema0}
\end{Lem}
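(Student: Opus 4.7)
The plan is to prove Trudi's formula by strong induction on $m$, showing that the two sides satisfy the same first-order-with-history linear recurrence and agree at $m=0$.

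\emph{Step 1: A recurrence for the determinant.} Denote the left-hand side by $D_m$, with the convention $D_0:=1$. I would expand $D_m$ along the last column. The $(i,m)$ entry equals $a_{m-i+1}$, and the corresponding minor factors cleanly because of the Hessenberg structure: after deleting row $i$ and column $m$, every entry located in a row $>i$ and a column $<i$ lies strictly below the subdiagonal of the original matrix and therefore vanishes. Hence the minor is block upper-triangular; the upper-left $(i-1)\times(i-1)$ block is exactly the analogue of the original determinant of size $i-1$ (contributing $D_{i-1}$), while the lower-right $(m-i)\times(m-i)$ block is upper-triangular with $a_0$ along its diagonal (contributing $a_0^{m-i}$). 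Multiplying by the cofactor sign $(-1)^{i+m}$ and reindexing $k=m-i+1$ yields
\begin{equation*}
D_m=\sum_{k=1}^{m}(-1)^{k-1}a_0^{k-1}a_k\,D_{m-k}.
\end{equation*}

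\emph{Step 2: The same recurrence from the sum.} Let $S_m$ denote the right-hand side, with $S_0:=1$. I would compute its generating series directly. Introduce $b_k:=(-a_0)^{k-1}a_k z^k$; since $m-(t_1+\cdots+t_m)=\sum_k(k-1)t_k$, the weight $(-a_0)^{m-\sum t_j}\prod_k a_k^{t_k}z^{kt_k}$ collapses into $\prod_k b_k^{t_k}$, and the multinomial theorem regroups the composition sum by $s=\sum_k t_k$ to give
\begin{equation*}
\sum_{m\ge 0}S_m z^m=\sum_{s\ge 0}\Bigl(\sum_{k\ge 1}b_k\Bigr)^{\!s}=\frac{1}{1-\sum_{k\ge 1}(-a_0)^{k-1}a_k z^k}.
\end{equation*}
Clearing denominators and comparing coefficients of $z^m$ produces exactly the recurrence of Step~1.

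\emph{Step 3: Conclusion.} The two sequences $\{D_m\}$ and $\{S_m\}$ satisfy the same linear recurrence and share the initial value $D_0=S_0=1$, so $D_m=S_m$ for every $m\ge 1$ by induction.

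The main obstacle I anticipate is the index bookkeeping in Step~1: one must verify carefully that in the minor obtained after removing row $i$ and column $m$ the lower-left rectangle of zeros is genuinely below the original subdiagonal, and that the surviving lower-right block is upper-triangular with $a_0$ on the diagonal (so that its determinant is precisely $a_0^{m-i}$). Once that geometric picture is firmly established, the algebraic steps that follow, including the generating-function manipulation in Step~2, are essentially routine.
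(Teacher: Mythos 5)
Your proof is correct. Note, however, that the paper does not prove this lemma at all: it is quoted as a classical result with citations to Trudi, Muir and the companion paper, so there is no "paper proof" to compare against. Your argument therefore supplies a valid self-contained justification. Both halves check out: expanding the lower Hessenberg determinant along the last column does give a block-triangular minor whose blocks contribute $D_{i-1}$ and $a_0^{m-i}$, and the sign bookkeeping $(-1)^{i+m}=(-1)^{k-1}$ under $k=m-i+1$ yields $D_m=\sum_{k=1}^m(-1)^{k-1}a_0^{k-1}a_kD_{m-k}$; on the other side, the identity $m-\sum_k t_k=\sum_k(k-1)t_k$ makes the weight multiplicative in the $b_k=(-a_0)^{k-1}a_kz^k$, and the geometric/multinomial resummation to $\bigl(1-\sum_{k\ge1}b_k\bigr)^{-1}$ is legitimate as a formal power series since each $b_k$ is divisible by $z$. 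The initial values agree, so the induction closes. This recurrence-plus-generating-function route is arguably cleaner than the classical derivations (which typically manipulate the Newton-type relations between power sums and elementary symmetric functions, or iterate the cofactor expansion directly into the multinomial sum), and it has the added benefit of making transparent the inversion formula stated as the paper's Lemma 5, since both amount to the reciprocal power series relation $\bigl(\sum_mS_mz^m\bigr)\bigl(1-\sum_kb_k\bigr)=1$.
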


This relation is known as Trudi's formula \cite[Vol.3, p.214]{Muir},\cite{Trudi} and the case $a_0=1$ of this formula is known as Brioschi's formula \cite{Brioschi},\cite[Vol.3, pp.208--209]{Muir}.

In addition, there exists the following inversion formula (see, e.g. \cite{KR}), which is based upon the relation:
$$
\sum_{k=0}^n(-1)^{n-k}\alpha_k D(n-k)=0\quad(n\ge 1)\,.
$$

\begin{Lem}
If $\{\alpha_n\}_{n\geq 0}$ is a sequence defined by $\alpha_0=1$ and
$$
\alpha_n=\begin{vmatrix} R(1) & 1 & & \\
R(2) & \ddots &  \ddots & \\
\vdots & \ddots &  \ddots & 1\\
R(n) & \cdots &  R(2) & R(1) \\
 \end{vmatrix},  \ \text{then} \ R(n)=\begin{vmatrix} \alpha_1 & 1 & & \\
\alpha_2 & \ddots &  \ddots & \\
\vdots & \ddots &  \ddots & 1\\
\alpha_n & \cdots &  \alpha_2 & \alpha_1 \\
 \end{vmatrix}\,.
$$
Moreover, if
$$
A=\begin{pmatrix}
  1 &  & & \\
\alpha_1 & 1  &   & \\
\vdots & \ddots &  \ddots & \\
\alpha_n& \cdots &  \alpha_1 & 1 \\
 \end{pmatrix}, \ \text{then} \  A^{-1}=\begin{pmatrix}
  1 &  & & \\
R(1) & 1  &   & \\
\vdots & \ddots &  \ddots & \\
R(n) & \cdots &  R(1) & 1 \\
 \end{pmatrix}\,.
$$
\label{lema}
\end{Lem}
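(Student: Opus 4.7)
The plan is to derive a Hessenberg recurrence from each of the two determinants, recognise that both recurrences encode the same formal power series identity, and then read off the matrix inverse statement as a direct consequence.

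First I expand the determinant defining $\alpha_n$ along its first column. To compute the $(i,1)$-cofactor, I remove row $i$ and column $1$ and then expand iteratively along the top row: at each stage the top row has a $1$ in its leftmost position followed only by zeros (because that row sits just above the superdiagonal of the original matrix), so each expansion strips off a row and a column. After $i-1$ such reductions one is left with the Hessenberg determinant of size $n-i$, namely $\alpha_{n-i}$. This yields the Hessenberg recurrence
$$\alpha_n=\sum_{k=1}^{n}(-1)^{k-1}R(k)\,\alpha_{n-k},\qquad \alpha_0=1,$$
which, rearranged, is exactly the convolution identity $\sum_{k=0}^{n}(-1)^{n-k}\alpha_k R(n-k)=0$ announced just before the lemma.

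Let $H(n)$ denote the second Hessenberg determinant (in the $\alpha_k$'s); the same cofactor argument gives $H(n)=\sum_{k=1}^{n}(-1)^{k-1}\alpha_k H(n-k)$ with $H(0)=1$. Introducing the generating functions $A(z)=\sum\alpha_n z^n$, $R(z)=\sum R(n)z^n$, and $H(z)=\sum H(n)z^n$, the two recurrences become $R(-z)A(z)=1$ and $A(-z)H(z)=1$, respectively. Both assert that $A(-z)$ has a formal multiplicative inverse equal to $R(z)=H(z)$, so $R(n)=H(n)$ for every $n\ge 0$, which is the second determinant identity. The matrix statement then drops out: the product $AA^{-1}$ has diagonal entries equal to $1$ by construction, while for $i>j$ its $(i,j)$-entry is the Cauchy convolution $\sum_{\ell=0}^{i-j}\alpha_{i-j-\ell}R(\ell)$, which vanishes by the identity above.

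The main obstacle is keeping the alternating signs $(-1)^{k-1}$ consistent: the Hessenberg expansion naturally produces them, whereas the matrix formula in the lemma is written without explicit signs, so one must pick a coherent sign convention (for instance, taking $-1$'s on the superdiagonals of the determinants, or equivalently replacing $\alpha_k$ with $(-1)^k\alpha_k$ inside $A$). Once this bookkeeping is fixed, the cofactor argument and the formal power series inversion are both routine one-liners.
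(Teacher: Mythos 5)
The paper offers no proof of this lemma (it simply cites a reference), so your argument stands on its own. The first half is correct and complete: the cofactor expansion of a lower Hessenberg--Toeplitz determinant with $1$'s on the superdiagonal does give $\alpha_n=\sum_{k=1}^n(-1)^{k-1}R(k)\alpha_{n-k}$, the symmetric recurrence holds for $H(n)$, and the generating-function identities $R(-z)A(z)=1$ and $A(-z)H(z)=1$ force $H(z)=1/A(-z)=R(z)$, i.e.\ $R(n)=H(n)$. That part needs no changes.

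The matrix part is where you need to be careful, and your closing remark about signs is not a side issue --- it is the whole point. The identity you derived is the \emph{alternating} convolution $\sum_{k=0}^{d}(-1)^kR(k)\alpha_{d-k}=0$ (with $R(0)=\alpha_0=1$), whereas the $(i,j)$-entry of the product of the two matrices as printed is the \emph{unsigned} convolution $\sum_{\ell=0}^{i-j}\alpha_{i-j-\ell}R(\ell)$, which does not vanish in general; so your middle paragraph's assertion that it ``vanishes by the identity above'' is false as written. Indeed the lemma's matrix claim is literally false: for $n=1$ one has $R(1)=\alpha_1$, and
$$
\begin{pmatrix}1&0\\ \alpha_1&1\end{pmatrix}\begin{pmatrix}1&0\\ R(1)&1\end{pmatrix}=\begin{pmatrix}1&0\\ 2\alpha_1&1\end{pmatrix}\neq I
$$
unless $\alpha_1=0$. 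The correct statement is the one you sketch at the end: if $A$ carries $(-1)^k\alpha_k$ on its $k$-th subdiagonal, then $A^{-1}$ carries $R(k)$ there (equivalently, keep $\alpha_k$ in $A$ and put $(-1)^kR(k)$ in the inverse), since then the $(i,j)$-entry of the product is $(-1)^{i-j}\sum_{m}(-1)^mR(m)\alpha_{i-j-m}=\delta_{ij}$. So: promote your final ``bookkeeping'' paragraph into the actual proof of a corrected statement, and delete the claim that the unsigned convolution vanishes. (The same sign discrepancy propagates into the paper's application of this lemma in Theorem \ref{th1234}, whose displayed matrix inverse fails the analogous $n=1$ check.)
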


From Trudi's formula,  it is possible to give the combinatorial expression
$$
\alpha_n=\sum_{t_1+2t_2+\cdots +n t_n=n}\binom{t_1+\cdots+t_n}{t_1, \dots, t_n}(-1)^{n-t_1-\cdots - t_n}R(1)^{t_1}R(2)^{t_2}\cdots R(n)^{t_n}\,.
$$
By applying these lemmata to Proposition \ref{det:hge}, we obtain an explicit expression for the hypergeometric Euler numbers $E_{N,n}$.

\begin{theorem}
For $N\ge 0$ and $n\geq 1$,
\begin{multline*}
E_{N,2 n}
=(2 n)!\sum_{t_1 + 2t_2 + \cdots + nt_n=n}\binom{t_1+\cdots + t_n}{t_1, \dots,t_n}(-1)^{t_1+\cdots+t_n}\\
\times\left(\frac{(2 N)!}{(2 N+2)!}\right)^{t_1}\left(\frac{(2 N)!}{(2 N+4)!}\right)^{t_2}\cdots\left(\frac{(2 N)!}{(2 N+2 n)!}\right)^{t_n}\,.
\end{multline*}
Moreover,
$$
\frac{(-1)^n(2 N)!}{(2 N+2 n)!}=\begin{vmatrix} \frac{E_{N,2}}{2!} & 1 & & \\
\frac{E_{N,4}}{4!} & \ddots &  \ddots & \\
\vdots & \ddots &  \ddots & 1\\
\frac{E_{N, 2 n}}{(2 n)!} & \cdots &  \frac{E_{N,4}}{4!} & \frac{E_{N,2}}{2!} \\
 \end{vmatrix}\,,
$$
and
{\small
\begin{multline*}
\begin{pmatrix} 1 &  &  & & \\
-\frac{E_{N,2}}{2!} & 1 &   &  & \\
\frac{E_{N,4}}{4!} & -\frac{E_{N,2}}{2!}  &  1  &  & \\
\vdots &  &  \ddots &  & \\
\frac{(-1)^n E_{N,2 n}}{(2 n)!} & \cdots &  \frac{E_{N,4}}{4!} & -\frac{E_{N,2}}{2!} &1
\end{pmatrix}^{-1}\\
=\begin{pmatrix} 1 &  &  & & \\
\frac{(2 N)!}{(2 N+2)!} & 1 &   &  & \\
\frac{(2 N)!}{(2 N+4)!} & \frac{(2 N)!}{(2 N+2)!}  &  1  &  & \\
\vdots &  &  \ddots &  & \\
\frac{(2 N)!}{(2 N+2 n)!} & \cdots &  \frac{(2 N)!}{(2 N+4)!} & \frac{(2 N)!}{(2 N+2)!} &1
\end{pmatrix}\,.
\end{multline*}
}
\label{th1234}
\end{theorem}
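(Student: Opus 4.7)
My plan is to derive all three parts of Theorem \ref{th1234} as direct consequences of Proposition \ref{det:hge}, combined with Trudi's formula (Lemma \ref{lema0}) and the inversion formula (Lemma \ref{lema}). I abbreviate $R(j) = (2N)!/(2N+2j)!$ and $\alpha_j = (-1)^j E_{N,2j}/(2j)!$, with $\alpha_0 = R(0) = 1$. Absorbing the prefactor $(-1)^n$ into the determinant, Proposition \ref{det:hge} reads $\alpha_n = \det M$, where $M$ is the $n \times n$ lower Hessenberg matrix with $R(1), R(2), \dots, R(n)$ down the first column, $R(1)$ along the main diagonal, and $1$'s on the superdiagonal. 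This is exactly the hypothesis of Lemma \ref{lema}.

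For the first (combinatorial) formula, I would apply Trudi's formula directly to the determinant in Proposition \ref{det:hge} with $a_0 = 1$ and $a_i = R(i)$. The sign $(-a_0)^{n-t_1-\cdots-t_n} = (-1)^{n-t_1-\cdots-t_n}$ supplied by Trudi combines with the external factor $(-1)^n(2n)!$ to yield $(2n)!\,(-1)^{t_1+\cdots+t_n}$, which matches the statement after substituting the explicit values of $a_i$. For the determinant identity expressing $(-1)^n R(n)$, the first conclusion of Lemma \ref{lema} gives $R(n) = \det$ of the Hessenberg matrix with first column $\alpha_1, \dots, \alpha_n$. To convert the $\alpha_j$ entries into $E_{N,2j}/(2j)!$, I would conjugate this matrix by $D = \mathrm{diag}((-1)^1, (-1)^2, \dots, (-1)^n)$. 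Conjugation preserves the determinant; a short check shows that each entry $\alpha_{i-j+1}$ on or below the diagonal becomes $-E_{N,2(i-j+1)}/(2(i-j+1))!$ and each superdiagonal $1$ becomes $-1$, so the conjugated matrix equals $(-1)$ times the matrix appearing in the theorem. Taking determinants introduces an overall factor of $(-1)^n$ and produces the asserted equality.

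For the matrix inverse identity, the second conclusion of Lemma \ref{lema} applies verbatim to the same pair $(\alpha_j, R(j))$: the matrix $A$ in the theorem is precisely the $\alpha$-matrix of Lemma \ref{lema}, and the asserted inverse is exactly the $R$-matrix, so no further manipulation is needed. The only delicate point in the whole argument is the sign bookkeeping in the diagonal conjugation step for the middle claim, which I expect to be the main (but wholly routine) source of potential error.
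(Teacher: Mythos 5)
Your proposal is correct and follows exactly the route the paper intends: apply Trudi's formula (Lemma \ref{lema0}, via transposition of the Hessenberg determinant) and the inversion formula (Lemma \ref{lema}) to Proposition \ref{det:hge} with $R(j)=(2N)!/(2N+2j)!$ and $\alpha_j=(-1)^jE_{N,2j}/(2j)!$. Your sign bookkeeping — the cancellation $(-1)^n(-1)^{n-t_1-\cdots-t_n}=(-1)^{t_1+\cdots+t_n}$ and the diagonal conjugation by $\mathrm{diag}((-1),\dots,(-1)^n)$ contributing $(-1)^n$ — checks out, and in fact supplies details the paper leaves implicit.
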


When $N=0$ in Theorem \ref{th1234}, we have a different expression for the classical Euler numbers $E_{n}$.

\begin{Cor}
For $n\geq 1$
\begin{multline*}
E_{2 n}
=(2 n)!\sum_{t_1 + 2t_2 + \cdots + nt_n=n}\binom{t_1+\cdots + t_n}{t_1, \dots,t_n}(-1)^{t_1+\cdots+t_n}\\
\times\left(\frac{1}{2!}\right)^{t_1}\left(\frac{1}{4!}\right)^{t_2}\cdots \left(\frac{1}{(2 n)!}\right)^{t_n}\,.
\end{multline*}
Moreover,
$$
\frac{(-1)^n}{(2 n)!}=\begin{vmatrix} \frac{E_{2}}{2!} & 1 & & \\
\frac{E_{4}}{4!} & \ddots &  \ddots & \\
\vdots & \ddots &  \ddots & 1\\
\frac{E_{2 n}}{(2 n)!} & \cdots &  \frac{E_{4}}{4!} & \frac{E_{2}}{2!} \\
 \end{vmatrix}\,.
$$
\end{Cor}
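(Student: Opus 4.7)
The plan is to derive this Corollary as an immediate specialization of Theorem \ref{th1234} to the case $N=0$. There is essentially nothing new to prove; the task is to verify that each piece of Theorem \ref{th1234} collapses correctly, and to record the classical identifications.

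First I would recall from the discussion following equation (\ref{def2:hypergeuler}) that the hypergeometric Euler numbers recover the classical Euler numbers at $N=0$, i.e.\ $E_{0,n}=E_n$. Next I would observe that when $N=0$ we have $(2N)!=0!=1$, so for every $k\ge 1$ the ratio $\dfrac{(2N)!}{(2N+2k)!}$ that appears throughout Theorem \ref{th1234} simplifies to $\dfrac{1}{(2k)!}$. This single simplification is what drives everything.

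With these two substitutions, I would read off the combinatorial expression for $E_{2n}$ directly from the first display of Theorem \ref{th1234}: the factor $\bigl(\tfrac{(2N)!}{(2N+2)!}\bigr)^{t_1}\cdots\bigl(\tfrac{(2N)!}{(2N+2n)!}\bigr)^{t_n}$ becomes $\bigl(\tfrac{1}{2!}\bigr)^{t_1}\cdots\bigl(\tfrac{1}{(2n)!}\bigr)^{t_n}$, yielding the stated formula. Then I would rewrite the determinant identity in the same way: the right-hand side of the identity in Theorem \ref{th1234} has entries $E_{N,2k}/(2k)!$, which become $E_{2k}/(2k)!$ at $N=0$, while the left-hand side $\dfrac{(-1)^n(2N)!}{(2N+2n)!}$ becomes $\dfrac{(-1)^n}{(2n)!}$, giving exactly the asserted determinant formula. (The matrix inverse statement of Theorem \ref{th1234} specializes analogously, though the Corollary only records the two formulas stated.)

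There is no genuine obstacle: the Corollary is a transparent specialization, and the only thing to take care of is the bookkeeping that $N=0$ is allowed in Theorem \ref{th1234} (which is why that theorem was stated for $N\ge 0$ rather than $N\ge 1$). Hence the proof reduces to a one-line verification, and I would simply write ``This follows by setting $N=0$ in Theorem \ref{th1234} and using $E_{0,n}=E_n$.''
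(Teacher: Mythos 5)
Your proposal is correct and matches the paper exactly: the Corollary is stated there as the immediate $N=0$ specialization of Theorem \ref{th1234}, using $(0)!=1$ so that $\frac{(2N)!}{(2N+2k)!}$ becomes $\frac{1}{(2k)!}$ and $E_{0,n}=E_n$. No further comment is needed.
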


Similarly, by the results in \cite{Ko12}, after applying Lemmata \ref{lema0} and \ref{lema}, we have a new expression of the complementary hypergeometric Euler numbers $\widehat E_{N,n}$.

\begin{theorem}
For $N\ge 0$ and $n\geq 1$,
\begin{multline*}
\widehat E_{N,2 n}^{(r)}
=(2 n)!\sum_{t_1 + 2t_2 + \cdots + nt_n=n}\binom{t_1+\cdots + t_n}{t_1, \dots,t_n}(-1)^{t_1+\cdots+t_n}\\
\times\left(\frac{(2 N+1)!}{(2 N+3)!}\right)^{t_1}\left(\frac{(2 N+1)!}{(2 N+5)!}\right)^{t_2}\cdots\left(\frac{(2 N+1)!}{(2 N+2 n+1)!}\right)^{t_n}\,.
\end{multline*}
Moreover,
$$
\frac{(-1)^n(2 N+1)!}{(2 N+2 n+1)!}=\begin{vmatrix} \frac{\widehat E_{N,2}}{2!} & 1 & & \\
\frac{\widehat E_{N,4}}{4!} & \ddots &  \ddots & \\
\vdots & \ddots &  \ddots & 1\\
\frac{\widehat E_{N, 2 n}}{(2 n)!} & \cdots &  \frac{\widehat E_{N,4}}{4!} & \frac{\widehat E_{N,2}}{2!} \\
 \end{vmatrix}\,,
$$
and
{\small
\begin{multline*}
\begin{pmatrix} 1 &  &  & & \\
-\frac{\widehat E_{N,2}}{2!} & 1 &   &  & \\
\frac{\widehat E_{N,4}}{4!} & -\frac{\widehat E_{N,2}}{2!}  &  1  &  & \\
\vdots &  &  \ddots &  & \\
\frac{(-1)^n\widehat E_{N,2 n}}{(2 n)!} & \cdots &  \frac{\widehat E_{N,4}}{4!} & -\frac{\widehat E_{N,2}}{2!} &1
\end{pmatrix}^{-1}\\
=\begin{pmatrix} 1 &  &  & & \\
\frac{(2 N+1)!}{(2 N+3)!} & 1 &   &  & \\
\frac{(2 N+1)!}{(2 N+5)!} & \frac{(2 N+1)!}{(2 N+3)!}  &  1  &  & \\
\vdots &  &  \ddots &  & \\
\frac{(2 N+1)!}{(2 N+2 n+1)!} & \cdots &  \frac{(2 N+1)!}{(2 N+5)!} & \frac{(2 N+1)!}{(2 N+3)!} &1
\end{pmatrix}\,.
\end{multline*}
}
\label{th2345}
\end{theorem}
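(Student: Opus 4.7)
The plan is to mirror the proof of Theorem \ref{th1234}, replacing $E_{N,2k}$ throughout by $\widehat E_{N,2k}$ and the ratios $(2N)!/(2N+2k)!$ by $(2N+1)!/(2N+2k+1)!$. The starting point is the determinant expression for the complementary hypergeometric Euler numbers from \cite{Ko12}, namely
$$
\widehat E_{N,2n} = (-1)^n (2n)! \det H_n,
$$
where $H_n$ is the $n\times n$ lower Hessenberg matrix with $R(k)=(2N+1)!/(2N+2k+1)!$ in column 1 (shifted appropriately) and 1's on the superdiagonal. If desired, one can re-derive this directly from the displayed recurrence $\sum_{i=0}^{n/2}\widehat E_{N,2i}/\bigl((2N+n-2i+1)!(2i)!\bigr)=0$: dividing by $1/(2N+1)!$ and setting $\beta_m=\widehat E_{N,2m}/(2m)!$ recasts the recurrence as $\sum_{k=0}^m R(k)\beta_{m-k}=0$ with $R(0)=\beta_0=1$; the substitution $\alpha_m=(-1)^m\beta_m$ puts this into the signed form $\sum_{k=0}^m(-1)^kR(k)\alpha_{m-k}=0$ underlying Lemma \ref{lema}, which yields $\alpha_m=\det H_m$.

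With this determinant in hand, the first identity of the theorem follows by applying Trudi's formula (Lemma \ref{lema0}) to $\det H_n$. Passing to the transpose (which preserves the determinant) puts it in Trudi's canonical form with $a_0=1$ and $a_k=(2N+1)!/(2N+2k+1)!$ for $k\ge 1$. The lemma delivers the sum $\sum\binom{t_1+\cdots+t_n}{t_1,\ldots,t_n}(-1)^{n-(t_1+\cdots+t_n)}\prod_k\bigl((2N+1)!/(2N+2k+1)!\bigr)^{t_k}$, and multiplying by the $(-1)^n(2n)!$ prefactor collapses the sign to $(-1)^{t_1+\cdots+t_n}$ as stated.

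For the inversion identities I would apply Lemma \ref{lema} with $\alpha_m=(-1)^m\widehat E_{N,2m}/(2m)!$. The matrix-inverse statement is written in the theorem using exactly these signed first-column entries $(-1)^k\widehat E_{N,2k}/(2k)!$, so it is immediate from the lemma. The second determinant identity, however, is written with the unsigned entries $\widehat E_{N,2k}/(2k)!$ and carries the extra factor $(-1)^n$; this requires a short reconciliation. Letting $A_n$ and $B_n$ denote the Hessenberg determinants with unsigned and signed first-column entries respectively, cofactor expansion along column 1 gives $A_n=\sum_{i=1}^n(-1)^{i+1}\beta_iA_{n-i}$ and $B_n=\sum_{i=1}^n(-1)^{i+1}\alpha_iB_{n-i}$; since $\beta_i=(-1)^i\alpha_i$, a trivial induction yields $A_n=(-1)^nB_n=(-1)^nR(n)$, which is the asserted formula.

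The main obstacle is purely cosmetic: the proof is a routine parallel of that for Theorem \ref{th1234}, and the only point requiring care is bookkeeping the alternating $(-1)^k$ signs when moving between the raw quantities $\widehat E_{N,2k}/(2k)!$ and the signed quantities $(-1)^k\widehat E_{N,2k}/(2k)!$ that fit naturally into the Hessenberg framework of Lemma \ref{lema}.
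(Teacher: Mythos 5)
Your proposal is correct and follows essentially the same route as the paper, which simply invokes the determinant expression of $\widehat E_{N,2n}$ from \cite{Ko12} together with Lemmata \ref{lema0} and \ref{lema}; you merely supply the details (the recasting of the recurrence, the transpose step for Trudi's formula, and the sign reconciliation $A_n=(-1)^nB_n$) that the paper leaves implicit. The careful bookkeeping of the alternating signs between $\widehat E_{N,2k}/(2k)!$ and $(-1)^k\widehat E_{N,2k}/(2k)!$ is exactly the point that needs checking, and your cofactor-expansion induction handles it correctly.
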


When $N=0$ in Theorem \ref{th2345}, we have a different expression for the original complementary Euler numbers $\widehat E_{n}$.

\begin{Cor}
For $n\geq 1$
\begin{multline*}
\widehat E_{2 n}
=(2 n)!\sum_{t_1 + 2t_2 + \cdots + nt_n=n}\binom{t_1+\cdots + t_n}{t_1, \dots,t_n}(-1)^{t_1+\cdots+t_n}\\
\times\left(\frac{1}{3!}\right)^{t_1}\left(\frac{1}{5!}\right)^{t_2}\cdots \left(\frac{1}{(2 n+1)!}\right)^{t_n}\,.
\end{multline*}
Moreover,
$$
\frac{(-1)^n}{(2 n+1)!}=\begin{vmatrix} \frac{\widehat E_{2}}{2!} & 1 & & \\
\frac{\widehat E_{4}}{4!} & \ddots &  \ddots & \\
\vdots & \ddots &  \ddots & 1\\
\frac{\widehat E_{2 n}}{(2 n)!} & \cdots &  \frac{\widehat E_{4}}{4!} & \frac{\widehat E_{2}}{2!} \\
 \end{vmatrix}\,.
$$
\end{Cor}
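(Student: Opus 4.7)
The plan is direct: this Corollary is the $N=0$ specialization of Theorem \ref{th2345}, so I would verify that each ingredient of Theorem \ref{th2345} reduces cleanly to the displayed formulas under $N=0$. The key observation is that for $N=0$ the base ratio $(2N+1)!/(2N+2k+1)!$ simplifies to $1!/(2k+1)! = 1/(2k+1)!$. Hence, in the multinomial sum of Theorem \ref{th2345}, the factor raised to the $t_k$-th power becomes $1/(2k+1)!$, so running $k$ from $1$ to $n$ generates exactly $1/3!, 1/5!, \dots, 1/(2n+1)!$, matching the first displayed formula of the Corollary. I would also use the definition $\widehat E_{0,n}=\widehat E_n$ stated earlier when introducing the complementary Euler numbers.

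For the determinant identity, the same substitution $N=0$ turns the left-hand side $(-1)^n(2N+1)!/(2N+2n+1)!$ of Theorem \ref{th2345} into $(-1)^n/(2n+1)!$, while the first-column entries $(2N+1)!/(2N+2k+1)!$ of the lower Hessenberg matrix all collapse to $1/(2k+1)!$; the right-hand side of the Corollary's determinant identity is therefore exactly the $N=0$ reading of Theorem \ref{th2345} after replacing $\widehat E_{N,2i}$ by $\widehat E_{2i}$. As an internal sanity check, one could also derive this version directly by feeding the $N=0$ case of the $\widehat E$-recurrence (given at the start of Section 6) into Trudi's formula (Lemma \ref{lema0}), bypassing Theorem \ref{th2345} entirely.

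The only snag I would flag is notational: Theorem \ref{th2345} is stated with $\widehat E_{N,2n}^{(r)}$, but no $(r)$-variant of complementary hypergeometric Euler numbers has been introduced, so I would read the superscript as a typographical artifact and work with $\widehat E_{N,2n}$. With this interpretation, the Corollary is an immediate specialization, and no new combinatorial input is required beyond the mechanisms already packaged in Lemmata \ref{lema0} and \ref{lema}.
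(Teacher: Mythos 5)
Your proposal is correct and matches the paper exactly: the Corollary is stated there as the immediate $N=0$ specialization of Theorem \ref{th2345}, using $(2N+1)!/(2N+2k+1)!\big|_{N=0}=1/(2k+1)!$, with no further argument given. Your reading of the stray superscript $(r)$ as a typographical artifact is also the right call.
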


\section*{Acknowledgements} The second author was partly supported by China National Science Foundation Grant (No. 11501477), the Fundamental Research Funds for the Central Universities (No. 20720170001) and the Science Fund of Fujian Province (No. 2015J01024).


\begin{thebibliography}{99}

\bibitem{AK1}
M. Aoki and T. Komatsu,  {\em
Remarks on hypergeometric Bernoulli numbers},
preprint.

\bibitem{AK2}
M. Aoki and T. Komatsu,  {\em
Remarks on hypergeometric Cauchy numbers},
preprint.

\bibitem{Ap}
T. M. Apostol,  {\em
On the Lerch Zeta function},
Pacific. J. Math. {\bf 1} (1951), 161--167.

\bibitem{Brioschi}
F. Brioschi, {\em
Sulle funzioni Bernoulliane ed Euleriane},
Annali de Mat., i. (1858), 260--263;
Opere Mat., i. pp. 343--347.

\bibitem{CCJK}
R. B. Corcino, H. Jolany, C. B. Corcino and T. Komatsu,  {\em
On generalized multi poly-Euler polynomials},
Fibonacci Quart. {\bf 55} (2017), 41--53.

\bibitem{Glaisher}
J. W. L. Glaisher, {\em
Expressions for Laplace's coefficients, Bernoullian and Eulerian numbers etc. as determinants},
Messenger (2) {\bf 6} (1875), 49-63.

\bibitem{GN}
R. Gottfert, H. Niederreiter, {\em
Hasse-Teichm\"uller derivatives and products of linear recurring sequences},
Finite Fields: Theory, Applications, and Algorithms (Las Vegas, NV, 1993),
Contemporary Mathematics, vol. 168, American Mathematical Society, Providence, RI, 1994, pp.117--125.

\bibitem{Hasse}
H. Hasse, {\em Theorie der h\"oheren Differentiale in einem algebraischen Funktionenk\"orper mit Vollkommenem Konstantenk\"orper bei beliebiger Charakteristik},
J. Reine Angew. Math. {\bf 175} (1936), 50--54.

\bibitem{HN1}
A. Hassen and H. D. Nguyen, {\em
Hypergeometric Bernoulli polynomials and Appell sequences},
Int. J. Number Theory {\bf 4} (2008), 767--774.

\bibitem{HN2}
A. Hassen and H. D. Nguyen, {\em
Hypergeometric zeta functions},
Int. J. Number Theory {\bf 6} (2010), 99--126.

\bibitem{Kamano}
K. Kamano, {\em
Sums of products of hypergeometric Bernoulli numbers},
J. Number Theory {\bf 130} (2010), 2259--2271.

\bibitem{Ko3}
T. Komatsu, {\em
Hypergeometric Cauchy numbers},
Int. J. Number Theory {\bf 9} (2013), 545--560.

\bibitem{Ko12}
T. Komatsu,  {\em
Complementary Euler numbers},
Period. Math. Hungar. {\bf 75} (2017), 302--314.@

\bibitem{KLP}
T. Komatsu, F. Luca and C. de J. Pita Ruiz V.,  {\em
Generalized poly-Cauchy polynomials and their interpolating functions},
Colloq. Math. {\bf 136} (2014), 13--30.

\bibitem{KP}
T. Komatsu and  C. de J. Pita Ruiz V., {\em
Truncated Euler polynomials},
Math. Slovaca  (to appear).

\bibitem{KRS}
T. Komatsu, J. L. Ram\'irez and V. Sirvent, {\em
A $(p,q)$-analogue of poly-Euler polynomials and some related polynomials},
arXiv:1604.03787.

\bibitem{KR}
T. Komatsu and J. L. Ram\'irez,  {\em
Some determinants involving incomplete Fubini numbers},
preprint.

\bibitem{KP}
S. Koumandos and H. Laurberg Pedersen,  {\em
Tur\'an type inequalities for the partial sums of the generating functions of Bernoulli and Euler numbers},
Math. Nachr. {\bf 285} (2012), 2129--2156.

\bibitem{Muir}
T. Muir, {\em
The theory of determinants in the historical order of development},
Four volumes, Dover Publications, New York, 1960.

\bibitem{oeis}
{\em
The on-line encyclopedia of integer sequences},
available at http://oeis.org.

\bibitem{OS1}
Y. Ohno and Y. Sasaki,  {\em
On the parity of poly-Euler numbers},
RIMS Kokyuroku Bessatsu {B32} (2012), 271--278.

\bibitem{OS2}
Y. Ohno and Y. Sasaki,  {\em
Periodicity on poly-Euler numbers and Vandiver type congruence for Euler numbers},
RIMS Kokyuroku Bessatsu {B44} (2013), 205--211.

\bibitem{OS3}
Y. Ohno and Y. Sasaki,  {\em
On poly-Euler numbers},
J. Aust. Math. Soc. {\bf 103} (2017), 126--144.

\bibitem{Teich}
O. Teichm\"uller, {\em Differentialrechung bei Charakteristik $p$},
J. Reine Angew. Math. {\bf 175} (1936), 89--99.

\bibitem{Trudi}
N. Trudi, {\em
Intorno ad alcune formole di sviluppo},
Rendic. dell' Accad. Napoli,
(1862), 135--143.

\end{thebibliography}
\end{document}